\documentclass[12pt]{article}
\usepackage{amsfonts}
\usepackage{fancyhdr}
\usepackage{titlesec}
\usepackage{comment}
\usepackage{ifthen}
\usepackage{pifont}
\usepackage{stmaryrd}
\usepackage{setspace}
\usepackage{indentfirst}
\usepackage{amsmath,amssymb,amscd,bbm,amsthm,mathrsfs,dsfont}
\usepackage{color}
\usepackage{enumerate}
\usepackage{geometry}
\usepackage{enumitem}
\usepackage{graphicx}
\usepackage{float}
\usepackage{epstopdf}
\usepackage{adjustbox}
\usepackage{subcaption}
\usepackage[numbers]{natbib}
\usepackage[colorlinks=true, linkcolor=blue, urlcolor=blue, citecolor=blue]{hyperref}
\usepackage{nameref}
\usepackage{cleveref} 
\usepackage{authblk} 

\newtheorem{assumption}{Assumption}[section]
\newtheorem{theorem}{Theorem}[section]
\newtheorem{lemma}{Lemma}[section]
\newtheorem{definition}{Definition}[section]

\newtheorem{remark}{Remark}[section]
\theoremstyle{definition}

\allowdisplaybreaks[4]
\date{}
\title{Indefinite Stochastic Linear-Quadratic Optimal Control Problem for a Markov Regime-Switching Model}
\title{Indefinite Stochastic Linear-Quadratic Optimal Control Problem for a Markov Regime-Switching Model}

\author{
  Na Li\thanks{School of Mathematical Sciences, Dalian University of Technology, Dalian 116024, China. Email: {\tt lina2025@dlut.edu.cn}.}, \qquad
  Yilin Wei\thanks{Corresponding author. School of Statistics and Mathematics, Shandong University of Finance and Economics, Jinan, 250014, China. Email: {\tt 221114002@mail.sdufe.edu.cn}.}, \qquad
  Harry Zheng\thanks{Department of Mathematics, Imperial College, London, SW7 2BZ, UK. Email: {\tt h.zheng@imperial.ac.uk}. This work is supported by the National Natural Science Foundation of China (No. 12571475, 12171279).}
}
\begin{document}
\maketitle
\begin{abstract}
This paper investigates an indefinite stochastic
linear-quadratic (SLQ) control problem
with parameters subject to Markov regime-switching.
Based on the well-posedness of the SLQ problem,
we introduce a relaxed compensator
that extends SLQ control problems
from the positive definite case to the indefinite case.
We analyze the corresponding stochastic Hamiltonian system 
for both unconstrained and constrained control cases 
under the indefinite framework and 
derive the corresponding optimal open-loop controls. 
We further investigate the associated Riccati equations 
for both unconstrained and constrained control cases 
and derive the closed-loop feedback forms of optimal controls. 
We  illustrate the theoretical results with an equity-bond asset allocation problem 
 under unconstrained and non-negative control constraints. 
Numerical simulations validate the effectiveness of the theoretical framework
and demonstrate its practical value in solving complex
stochastic control problems with Markov regime-switching. \\ \par\
\textbf{Keywords: }{Stochastic linear-quadratic;
Markov regime-switching;
Hamiltonian system; 
Riccati equation;
control constraint. }
\end{abstract}  
\newpage
 
\section{Introduction}
The Markov regime-switching model, 
as introduced by Hamilton \cite{hamilton1989new}, 
offers a more rigorous and comprehensive framework 
for describing market dynamics 
and has been extensively applied across various domains 
within economic and financial markets. 
For example, it has been employed in interest rate modeling\cite{dahlquist2000regime}, 
investment portfolio selection \cite{guidolin2007asset,zhang2012stochastic}, 
and option pricing \cite{tian2023analysis}, among other areas. 
Furthermore, Elliott {\em et al.} \cite{elliott2005option} 
and Mehrdoust {\em et al.} \cite{mehrdoust2023two} 
demonstrate through empirical studies that Markov regime-switching models 
are effective in fitting economic and financial time series, 
especially in capturing abrupt changes. 
A key feature of the model is that 
the switching mechanism is controlled by variables 
following a first-order Markov chain. 
Markov chain is a powerful tool for analyzing regime switching 
in the economic and financial market \cite{ang2012regime}. 

Equity-bond asset allocation (EBAA) 
is a core decision in enterprise asset management, 
aiming to balance appreciation and stability 
by weighting investments across stocks and bonds. 
Traditional approaches assume stable markets 
and use fixed or slowly adjusting ratios. 
Yet real financial systems are non-stationary, 
with macroeconomic shifts, policy changes, 
and sudden shocks driving regime switches 
that alter key statistical characteristics 
such as asset returns and volatility. 
Traditional asset allocation strategies based on 
a single distribution assumption 
often fail to adapt to complete market cycles, 
leading to strategy failure and risk mismatch.
The Markov regime-switching model can
effectively make up for the deficiencies of traditional
asset allocation models, 
enabling dynamic, cycle-adaptive adjustments. 
Thus, we construct 
a dynamic system that adapts to the evolving characteristics 
of market structure, and model the problem of EBAA.
Consider the following linear system
 \begin{equation}\label{Markov_eq1}
    \begin{cases}
      \begin{aligned}
        dx_t=&[A_{t,\alpha_t}x_t+B_{t,\alpha_t}u_t]dt+[C_{t,\alpha_t}x_t+D_{t,\alpha_t}u_t]dW_t,~~~t\in [0,T], 
       \end{aligned} \\
       x_0=\xi , \alpha_0=i_0,
    \end{cases}
\end{equation}
where 
$x_{t}$ represents the holding weight of stock assets at time $t$, and 
$1-x_{t}$ represents the weight of investment in bond assets at time $t$; 
$u_{t}$ represents the intensity of stock assets weight adjustment at time $t$; 
$\xi $ represents the initial holding weight of stock assets; 
$A_{t,\alpha_t}$ represents the natural evolution coefficient of stock assets weight; 
$B_{t,\alpha_t}$ represents the influence coefficient of 
adjustment intensity on stock assets weight; 
$C_{t,\alpha_t}$ represents the random fluctuation coefficient of 
stock assets weight;
$D_{t,\alpha_t}$ represents the random shock coefficient 
of adjustment intensity. 
In addition, the cost functional is
\begin{equation}\label{Markov_eq2}
    \begin{aligned}
        \mathcal{J}(\xi ,i_0;u_{\cdot})=&\frac{1}{2}\mathbb{E}\Bigg\{\int_0^T \left[\begin{array}{c}x_t\\u_t\end{array}\right]^{\top}
                        \left[\begin{array}{c c}Q_{t,\alpha_t}&S_{t,\alpha_t}\\S^{\top}_{t,\alpha_t}&R_{t,\alpha_t}\end{array}\right]
                        \left[\begin{array}{c}x_t\\u_t\end{array}\right]dt\\
          &+x^{\top}_TG_{T, \alpha_T}x_T|x_0=\xi, \alpha_0=i_0\Bigg\},
    \end{aligned}
\end{equation}
where 
$Q_{t,\alpha_t}$ represents the risk ``penalty'' coefficient 
for the deviation of stock assets weight from the target at time $t$; 
$S_{t,\alpha_t}$ represents the ``cross-penalty'' coefficient 
of stock assets weight and the adjustment intensity at time $t$; 
$R_{t,\alpha_t}$ represents the transaction ``cost'' coefficient 
for the adjustment at time $t$; 
$G_{T,\alpha_T}$ represents the ``penalty'' coefficient for the 
terminal stock assets weight, controlling the deviation of the 
final allocation structure from the target. 
Denote $\mathcal{U}$ is the admissible control set.

\noindent\textbf{Problem (EBAA): } Given any initial condition $(\xi , i_0)\in \mathbb{R}\times \mathbb{S}$, 
find an admissible control s satisfying 
\begin{equation}\label{Markov_eq3}
   V(\xi , i_0)=\mathcal{J}(\xi ,i_0;u^*_{\cdot})=\inf_{u_{\cdot}\in \mathcal{U}}\mathcal{J}(\xi ,i_0;u_{\cdot}). 
\end{equation}
When $u^*_{\cdot}$ is not subject to any constraints, 
this problem is an EBAA with unconstrained control, 
which we refer to as Problem (EBAA-UC); 
when $u^*_{\cdot}$ is restricted to non-negative inputs, 
it is an EBAA with constrained control, 
which we refer to as Problem (EBAA-CC). 

Problem (EBAA) is a stochastic linear quadratic (SLQ) control problem, 
since \eqref{Markov_eq1} is linear and \eqref{Markov_eq2} is quadratic. 
Ji and Chizeck \cite{ji1991jump,ji2002controllability} formulate a class of
continuous-time SLQ optimal controls involving regime-switching jumps. 
Lv {\em et al.} \cite{lv2023linear} considers an SLQ leader-follower stochastic differential game 
for regime-switching diffusions with mean-field interactions. 
Wen {\em et al.} \cite{wen2023stochastic} provides a comprehensive study of SLQ optimal control problems 
in Markov regime-switching systems, where the coefficients of the state equation 
and the weighting matrices of the cost functional are random. 
Additionally, Wen {\em et al.} \cite{wen2021weak} investigates 
the open-loop and weak closed-loop solvability 
of an SLQ optimal control problem in a Markov regime-switching system. 

In the classical setting, an SLQ optimal control problem in a
Markov regime-switching system 
can be solved elegantly via Riccati equation under some mild 
conditions on the weighting coefficients — specifically, 
the positive definiteness assumption imposed on the weighting matrices
(e.g., \cite{ji1991jump,ji2002controllability,lv2023linear}). 
However, the matrices $Q_{t,\alpha_t}$, 
$R_{t,\alpha_t}$, and $G_{T,\alpha_T}$ are not required 
to satisfy the positive definite condition in Problem (EBAA). 
$Q_{t,i}$ is negative definite: it can represent the reward related to 
the value of stock assets. 
$R_{t,i}$ is negative definite: it can represent the incentive related to 
the allocation of stock assets. 
$G_{T,i}$ is negative definite: it can represent the reward 
that investors receive for the terminal asset state. 
Classical SLQ theory with Markov regime-switching 
can not solve the indefinite weight.

Chen {\em et al.} \cite{chen1998stochastic} 
points out that the control weight 
cost must be positive definite seems neither necessary for the infimum of 
the cost functional being finite nor for the existence of optimal controls. 
Since then, growing scholarly attention has been devoted to the 
study of indefinite SLQ problems (e.g., \cite{refId0,li2020indefinite,li2018indefinitedelay}). 
In particular, recent advances have primarily concentrated on indefinite SLQ control problems 
with Markov regime-switching. 
Zhang {\em et al.} \cite{zhang2021open} investigates the SLQ optimal control problem 
for Markov regime-switching systems. 
Li and Zhou \cite{li2002indefinite} examines an indefinite SLQ control problem over 
a finite time horizon, where the parameters of the problem exhibit Markovian jumps. 
Liu {\em et al.} \cite{liu2005near} investigates near-optimal controls for regime-switching SLQ problems 
with indefinite control weight costs, deriving a system of Riccati equations, proving their convergence, and constructing near-optimal controls based on the limiting system.
Overall, the existing literature demonstrates that the indefinite weight assumption is far more than just a technical generalization. 
It gives rise to more complex solution structures, 
new solvability conditions, and valuable applications 
in finance and other related fields.

The classical SLQ framework faces numerous limitations 
when directly applied to practical problems with control constraints.
These limitations mainly have two effects: first, 
the ``theoretical optimal solution" often lacks practical utility 
as it falls outside real-world feasible regions; 
second, the theoretical solution's infeasibility 
leads to a significant gap between 
practical performance and theoretical expectations. 
Adding constraints further increases complexity of SLQ problems. 
Hu and Zhou \cite{hu2005constrained} 
derives explicit feedback control 
for a class of SLQ optimal control problems 
with control variables constrained in a cone 
via two extended stochastic Riccati equations (ESREs) solutions. 
Hu {\em et al.} \cite{hu2022constrained} extends the above research to 
Markov regime-switching framework with random coefficients.
However, these studies only focus on the standard case 
(positive definite cost functional coefficients) and the singular case 
(positive semi-definite coefficients), with no systematic investigation 
of the negative definite coefficients case. 

Many researchers conduct extensive studies 
on solving indefinite SLQ problems.
Li and Zhou \cite{li2002indefinite} 
addresses the indefinite SLQ problem 
via coupled generalized Riccati equations (CGREs).
Theoretically, the optimal control can be derived by solving CGREs; 
however, the solution process, which involves Moore-Penrose pseudo-inverse 
operations, may yield infinitely many optimal solutions, 
incurs high computational cost. 
Zhang et al. \cite{zhang2021open} 
derives the cost functional using It$\hat{{\rm o}}$'s formula 
with jumps and prove the equivalence 
between the closed-loop solvability 
and the existence of a regular solution to Riccati equation, 
yet the proof process is complex and still uses the Moore-Penrose pseudo-inverse technique. 
Liu and Zhou \cite{liu2005near} addresses a system of Riccati equations. 
The authors formulate 
a relatively simplified approximate problem 
but fail to obtain the exact analytical expression 
for the optimal control strategy. 
Yu \cite{refId0} proposes an equivalent cost functional method 
for SLQ problems with random coefficients and 
negative-weighted control costs. 
\cite{li2018indefinitedelay} and \cite{li2020indefinite} 
utilize a relaxed compensator method to tackle indefinite 
SLQ problems with delay 
and indefinite stochastic mean-field-type SLQ problems, respectively.

Inspired by the relaxed compensator method, 
we analyze indefinite SLQ control problems with Markov regime-switching in this paper. 
We consider both unconstrained and constrained control cases 
and address the challenges present in traditional literature.
There are three contributions as follows: 
\begin{enumerate}[label=(\roman*)]
\item We solve the unconstrained and constrained SLQ problems 
by the relaxed compensator method, respectively. 
This approach avoids discussing the solvability of Riccati equations 
and using the Moore-Penrose pseudoinverse technique. 
In particular, if Riccati equation has a solution, 
it can be regarded as a special case of a relaxed compensator.
\item The corresponding forward-backward stochastic differential 
equations (FBSDEs) and Riccati equations under both unconstrained and 
constrained control cases are solved. Due to the fact that these equations do not 
satisfy the classical assumptions, traditional methods are no longer 
applicable. 
\item The theoretical framework is applied to Problem (EBAA) for practical validation.  
Numerical simulations under varying market states demonstrate the 
effectiveness of the proposed method, thereby confirming the framework's practical applicability.
\end{enumerate}
The remainder of this paper is organized as follows. 
Section \ref{sec: 2} introduces notations and formulates an indefinite SLQ problem with Markov regime-switching. 
Section \ref{sec: Hamiltonian} analyzes the corresponding stochastic Hamiltonian system for both unconstrained and constrained control cases.
Section \ref{sec: Riccati} focuses on the related Riccati equation for both control cases.
Section \ref{sec: 5} studies Problem (EBAA) in unconstrained control and non-negative control constraints, and verifies the derived theoretical results via numerical simulations.
Section \ref{sec: 6} concludes the paper.

\section{Problem Formulation and Preliminaries}\label{sec: 2}
In this section, we formulate an indefinite SLQ problem with Markov regime-switching. 
Let $\mathbb{R}^n$ denote the $n$-dimensional Euclidean space, 
and let $\mathbb{R}^{n \times m}$ represent the space of all $n \times m$ matrices. 
We denote $\mathbf{0}$ as zero matrices of appropriate dimensions. 
The usual norm is denoted by $\lvert \cdot \rvert$, 
and the inner product in $\mathbb{R}^n$ is represented by $\left\langle \cdot, \cdot \right\rangle$. 
For a given vector or matrix $A$, the transpose is denoted by $A^\top$, 
and $A^{-1}$ denotes the inverse of $A$ when it exists. 
For any real number, we define $x^+ :=\max\{x,0\}$ and $x^- :=\max\{-x,0\}$. 
We consider a finite time horizon $[0, T]$ with a fixed $T > 0$. 
Let $(\Omega, \mathcal{F}, \mathbb{P}, \mathbb{F})$ be a complete filtered probability space. 
Within this space, we define two mutually independent stochastic processes: 
a one-dimensional Brownian motion $\{W_{t},~ t\in [0, T]\}$, 
and an irreducible homogeneous continuous-time Markov chain $\{\alpha_{t},~ t\in [0, T]\}$. 
$\mathcal{N}$ denotes the set of all the $\mathbb{P}$-null sets. 
For each $t \in [0, T]$, the filtration $\mathcal{F}^\alpha_t$ is generated by 
$\sigma\left\{\alpha_{s},~s\in [0, t]\right\}\vee\mathcal{N}$ and 
$\mathcal{F}_t$ is generated by 
$\sigma\left\{\alpha_{s}, W_{s},~s\in [0, t]\right\}\vee\mathcal{N}$. 
The filtration $\mathbb{F}$ is then defined as $\mathbb{F} := \{ \mathcal{F}_t,~ t \in [0, T] \}$. 
The conditional expectation with respect to $\mathcal{F}^\alpha_t$ is denoted by $\mathbb{E}[\cdot|\mathcal{F}^\alpha_t]$.

We assume that the Markov chain takes values in a finite state space $\mathbb{S}=\left\{1, \dots, d\right\}$ 
and initiates in a fixed state $i_0\in \mathbb{S}$ such that $\alpha_0=i_0$. 
The generator of the Markov chain $\alpha_t$ is denoted by $\mathbb{G}$, 
a $d \times d$ matrix given by $\mathbb{G} = (\pi_{i,j})_{i,j=1}^d$, 
where $\pi_{i,j}$ represents the transition rate or jump intensity of the Markov chain from state $i$ to state $j$. 
The transition probabilities of the Markov chain are described by the following expression 
$$\mathbb{P}\{\alpha_{t+\Delta t}=j|\alpha_t=i\}=\left\{\begin{array}{ll}\pi_{i,j}\Delta t+o(\Delta t),&\mathrm{if~}i\neq j,\\1+\pi_{i,i}\Delta t+o(\Delta t),&\mathrm{else},\end{array}\right.$$
where $\pi_{i,j}\ge 0$ for $i\neq j$ and $\pi_{i,i}=-\sum_{j\neq i}\pi_{i,j}$. 
Let $\mathcal{X}$ denote the indicator function. For each distinct pair of states $(i, j)$ in the state space $\mathbb{S}$, 
we associate a counting process defined by
$$M_{i,j}(t):=\sum_{0<s\leq t}\mathcal{X} _{\{\alpha_{s_-}=i\}}\mathcal{X} _{\{\alpha_{s}=j\}}, ~~~\forall t\in[0, T],~ i,j\in \mathbb{S}.$$ 
The process $M_{i,j}(t)$ counts the number of times the Markov chain $\alpha_t$ transitions from state $i$ to state $j$ by time $t$. 
If we compensate $M_{i,j}(t)$ by $\int_0^t\pi_{i,j}\mathcal{X} _{\{\alpha_{s_-}=i\}}ds$, then the resulting process
$$\widetilde{M}_{i,j}(t):=M_{i,j}(t)-\int_0^t\pi_{i,j}\mathcal{X} _{\{\alpha_{s_-}=i\}}ds$$
is a purely discontinuous square-integrable martingale with an initial value of zero (see Rogers and Williams \cite{rogers2000diffusions}). 

Moreover, we denote the sets of matrices as follows:
\begin{itemize} 
    \item $\mathcal{S}^n:$ the space of all $n\times n$ symmetric matrices.
    \item $\mathcal{S}^n_+:$ the subspace of all positive semi-definite matrices of $\mathcal{S}^n$.
    \item $\mathcal{S}^n_{++}:$ the subspace of all positive definite matrices of $\mathcal{S}^n$.
\end{itemize} 

For Euclidean space $\mathscr{H}$, here are some spaces that are carried out throughout this paper. 
\begin{itemize} 
    \item $C(s,l;\mathscr{H}):$ the space of all continuous functions $\{g(t),t\in [s,l]\}$. 
    \item $L^2(s,l;\mathscr{H}):$ the space of $\mathscr{H}$-valued deterministic functions $\{g(t),t\in [s,l]\}$ such that 
     $\int ^l_s\lvert g(t)\rvert^2dt<\infty$. 
    \item $L^\infty(s,l;\mathscr{H}):$ the space of $\mathscr{H}$-valued, unifomly bounded deterministic functions $\{g(t),t\in [s,l]\}$. 
    \item $L^2_{\mathbb{F}}(s,l;\mathscr{H}):$ the space of $\mathscr{H}$-valued, $\mathbb{F}$-progressively measurable stochastic processes $\{g(t,\omega), (t,\omega)\in [s,l]\times \Omega\}$ such that $\mathbb{E}\int^l_s\lvert g(t,\omega)\rvert^2dt<\infty$. 
    \item $L^{\infty}_{\mathbb{F}}(s,l;\mathscr{H}):$ the space of $\mathscr{H}$-valued, $\mathbb{F}$-progressively measurable bounded stochastic processes. 
    \item $\mathscr{M}^2_{\mathbb{F}}(s,l;\mathscr{H}):$ the space of $\mathbb{F}$-adapted and $c\grave{a}dl\grave{a}g$ stochastic processes $\{g(t,\omega,\alpha_t),$\\$(t,\omega,\alpha_t)\in [s,l]\times\Omega\times\mathbb{S}\}$ 
    such that $\mathbb{E}[\sup_{s\le t\le l}\lvert g(t,\omega,\alpha_t)\rvert^2]< \infty$. 
\end{itemize} 

 We consider a high dimensional controlled dynamic system with Markov regime-switching 
 with the form of $\eqref{Markov_eq1}$, 
where $A_{\cdot,\alpha_{\cdot}}$, $C_{\cdot,\alpha_{\cdot}}\in L^{\infty}(0, T;\mathbb{R}^{n\times n})$; 
$B_{\cdot,\alpha_{\cdot}}$, $D_{\cdot,\alpha_{\cdot}}\in L^{\infty}(0, T;\mathbb{R}^{n\times {m}})$; 
$\xi \in\mathbb{R}^n$; 
and $i_0\in \mathbb{S}$. 
$A_{t,\alpha_t}=A_{t, i}$, $B_{t,\alpha_t}=B_{t, i}$, 
$C_{t,\alpha_t}=C_{t, i}$, and $D_{t,\alpha_t}=D_{t, i}$ when $\alpha_t=i$ for each $i\in \mathbb{S}$. 
Let $\varGamma=\mathbb{R}^{m}_+$ be a given closed cone, 
i.e., $\varGamma$ is closed, and if $u \in \varGamma$, 
then $\kappa u \in \varGamma$, for all $\kappa>0$. 
This set $\varGamma$ serves as the constraint set for control values. 
The corresponding constrained admissible control set is defined as 
$\mathcal{U}^c_{ad}=\{u_\cdot \in L^2_{\mathbb{F}}(0, T; \mathbb{R}^{m}_+)|u_t\in \mathbb{R}^{m}_+\}$. 
If $\varGamma = \mathbb{R}^{m}$, 
the set $\varGamma$ serves as the unconstrained set for control values. 
In this case, the corresponding unconstrained admissible control set 
is denoted by 
$\mathcal{U}_{ad}=\{u_\cdot\in L^2_{\mathbb{F}}(0, T; \mathbb{R}^{m})|u_t \in\mathbb{R}^{m}\}$.
For notational simplicity in the general problem formulation, 
we shall use $\mathcal{U}$ to represent the admissible control set, 
which can be either $\mathcal{U}_{ad}$ or $\mathcal{U}^c_{ad}$ 
depending on the context.
Each element $u_{\cdot}$ in admissible control sets is called an admissible control. 
For a given initial condition $(\xi, i_0)$ and an admissible control $u_{\cdot}$, 
the associated cost functional is given by $\eqref{Markov_eq2}$,
where $Q_{\cdot, \alpha_{\cdot}} \in L^{\infty}(0, T; \mathcal{S}^n)$, $S_{\cdot, \alpha_{\cdot}} \in L^{\infty}(0, T; \mathbb{R}^{n \times m})$, $R_{\cdot, \alpha_{\cdot}} \in L^{\infty}(0, T; \mathcal{S}^m)$, and $G_{\cdot, \alpha_{\cdot}} \in \mathcal{S}^n$. 
When $\alpha_t = i$ (for any $i \in \mathbb{S}$), the matrices $Q_{t,\alpha_t}$, $S_{t,\alpha_t}$, and $R_{t,\alpha_t}$ 
are specified as $Q_{t, i}$, $S_{t, i}$, and $R_{t, i}$, respectively. 
Similarly, $G_{T, \alpha_T}$ is denoted by $G_{T, i}$ when $\alpha_T = i$ for each $i \in \mathbb{S}$.
We now formulate the SLQ control problem with Markov regime-switching (MRS) as follows.

\noindent\textbf{Problem (SLQ-MRS): }Given any initial condition $(\xi , i_0)\in \mathbb{R}^n\times \mathbb{S}$, 
find an admissible control $u^*_{\cdot}\in \mathcal{U}$ satisfying $\eqref{Markov_eq3}$.

If such an admissible control $u^*_{\cdot}$ exists, then $u^*_{\cdot}$ 
is called the optimal control for Problem (SLQ-MRS), $x^*_{\cdot}$ 
is called the corresponding optimal state trajectory, and 
$(x^*_{\cdot}, u^*_{\cdot})$ is an optimal pair (with respect to the initial condition $(\xi , i_0)$). 
Additionally, Problem (SLQ-MRS) is well-posed if 
$ V(\xi , i_0)>-\infty$, for all $(\xi , i_0)\in \mathbb{R}^n\times \mathbb{S}.$
When $u^*_{\cdot}\in \mathcal{U}_{ad}$, 
this problem is an SLQ-MRS with unconstrained control, 
which we refer to as Problem (SLQ-MRS-UC); 
when $u^*_{\cdot}\in \mathcal{U}^c_{ad}$, 
it is an SLQ-MRS with constrained control, 
which we refer to as Problem (SLQ-MRS-CC). 
 
To proceed, we introduce the positive definite condition and Schur's lemma, which will be key tools in the subsequent analysis. 
\begin{assumption}\label{Asm 2.1}{\rm (Positive definite condition)}  
    $$\left[Q_{\cdot,\alpha_{\cdot}}\ S_{\cdot,\alpha_{\cdot}};S^{\top}_{\cdot,\alpha_{\cdot}}\ R_{\cdot,\alpha_{\cdot}}\right]\in L^\infty(0, T; \mathcal{S}^{n+m}_+), \ G_{\cdot,\alpha_{\cdot}}\in \mathcal{S}^n_+,\ and\  R_{\cdot,\alpha_{\cdot}}\in L^\infty(0, T; \mathcal{S}^{m}_{++}).$$
\end{assumption}
\begin{lemma}\label{lem 2.1}{\rm (Schur's lemma)}
    Let $Q=Q^{\top}$, $S$, and $R=R^{\top}$
    be matrices of appropriate dimensions. The following conditions are equivalent
    \begin{enumerate}[label=(\roman*)]
        \item $Q-SR^{-1} S^{\top}\geq {\bf{0}},~R>{\bf{0}};$
        \item $\left[\begin{array}{c c}Q&S\\S^{\top}&R\end{array}\right]\geq {\bf{0}},~R>{\bf{0}}.$
    \end{enumerate}
\end{lemma}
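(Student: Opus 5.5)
The approach is a block congruence that diagonalizes the symmetric block matrix. Since $R>\mathbf{0}$ under either condition (i) or (ii), $R$ is invertible, so the matrix
$$P:=\left[\begin{array}{c c}I&-SR^{-1}\\ \mathbf{0}&I\end{array}\right]$$
is well defined. It is invertible, with inverse obtained by flipping the sign of the off-diagonal block. A direct block multiplication should then give
$$P\left[\begin{array}{c c}Q&S\\S^{\top}&R\end{array}\right]P^{\top}=\left[\begin{array}{c c}Q-SR^{-1}S^{\top}&\mathbf{0}\\ \mathbf{0}&R\end{array}\right].$$
The only facts used in this computation are $R=R^{\top}$ and $(R^{-1})^{\top}=R^{-1}$.

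Congruence by an invertible matrix preserves positive semi-definiteness in both directions. Indeed, $M\ge\mathbf{0}$ iff $z^{\top}Mz\ge 0$ for all $z$, and $z\mapsto P^{\top}z$ is a bijection. So the full block matrix is $\ge\mathbf{0}$ iff the block-diagonal matrix is $\ge\mathbf{0}$.

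A block-diagonal symmetric matrix is $\ge\mathbf{0}$ iff each diagonal block is $\ge\mathbf{0}$. To check this, test with vectors $(x,0)$ and $(0,y)$ in one direction, and sum the two quadratic forms in the other.

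Given $R>\mathbf{0}$, which is common to both conditions, the positivity of the block-diagonal matrix reduces to $Q-SR^{-1}S^{\top}\ge\mathbf{0}$. This gives (i)$\Leftrightarrow$(ii).

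There is no real obstacle here; the only care needed is the bookkeeping in the block product. As an alternative that avoids the congruence, one can minimize the quadratic form over $y$ for fixed $x$, with minimizer $y=-R^{-1}S^{\top}x$:
$$\min_y \left[\begin{array}{c}x\\y\end{array}\right]^{\top}\left[\begin{array}{c c}Q&S\\S^{\top}&R\end{array}\right]\left[\begin{array}{c}x\\y\end{array}\right]=x^{\top}(Q-SR^{-1}S^{\top})x.$$
This yields the same equivalence.
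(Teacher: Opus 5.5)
Your proof is correct and complete. The identity $PMP^{\top}=\mathrm{diag}(Q-SR^{-1}S^{\top},R)$, with $M$ the block matrix in (ii), checks out. Invariance of positive semi-definiteness under invertible congruence, together with the blockwise test, then gives (i)$\Leftrightarrow$(ii) once $R>\mathbf{0}$ is common to both conditions. Your completing-the-square variant is an equally valid way to see the same thing.

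The paper does not prove this lemma at all. It quotes it as the classical Schur complement fact and only applies it pointwise to the compensated blocks $(\mathcal{Q}^K,\mathcal{S}^K,\mathcal{R}^K)$ in Lemma \ref{Prop 2.1}. So there is no different route to compare against: your argument is the standard justification the paper takes for granted.
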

This paper focuses on the indefinite case. 
Namely, $Q_{t,\alpha_t}$, $S_{t,\alpha_t}$, $R_{t,\alpha_t}$, and $G_{T,\alpha_T}$ are all possibly indefinite. 
In this context, we aim to establish the well-posedness of 
Problem (SLQ-MRS) by introducing a relaxed compensator. 
Specifically, we define the following set of diffusion processes 
$$
\begin{aligned}
        \varUpsilon:=&\Bigg\{K_{\cdot, \alpha_{\cdot}}\in L^{\infty}_{\mathbb{F}}(0, T; \mathcal{S}^n)|K_{t,\alpha_t}=K_{0,\alpha_0}+\int_{0}^{t}\Theta _{s,\alpha_s}ds+\int_{0}^{t}\sum_{i,j=1}^{d}\pi_{i,j}(K_{s,j}-K_{s,i})ds\\
        &+\int_{0}^{t}\sum_{i,j=1}^{d}(K_{s,j}-K_{s,i})d\widetilde{M}_{i,j}(s), ~~~\forall s\in [0, t]
                        \Bigg\},
\end{aligned}$$
where $\Theta _{\cdot, \alpha_{\cdot}}\in L^\infty(0, T; \mathcal{S}^n)$ and $K_{0,\alpha_0}\in \mathcal{S}^n$. 
As pointed by Zhang {\em et al.} \cite{zhang2021open}, a solution of the process $K_{t,\alpha_t}$ exists. 

Let 
\begin{equation}
    \begin{aligned}
        \mathcal{J}^K(\xi ,i_0;u_{\cdot})=&\frac{1}{2}\mathbb{E}\Bigg\{\int_0^T \left[\begin{array}{c}x_t\\u_t\end{array}\right]^{\top}
                    \left[\begin{array}{c c}\mathcal{Q}^K_{t,\alpha_t}&\mathcal{S}^K_{t,\alpha_t}\\{(\mathcal{S}^K_{t,\alpha_t})}^{\top}&\mathcal{R}^K_{t,\alpha_t}\end{array}\right]
                    \left[\begin{array}{c}x_t\\u_t\end{array}\right]dt\\
                    &+x^{\top}_T\mathcal{G}^K_{T,\alpha_T}x_T|x_0=\xi, \alpha_0=i_0\Bigg\},
    \end{aligned}
    \nonumber
\end{equation}
where 
\begin{equation}
\begin{aligned}
    &\mathcal{Q}^K_{t,\alpha_t}=Q_{t,\alpha_t}+\Theta _{t,\alpha_t}+K_{t,\alpha_t}A_{t,\alpha_t}+A^{\top}_{t,\alpha_t}K_{t,\alpha_t}\\
    &\quad\qquad+C^{\top}_{t,\alpha_t}K_{t,\alpha_t}C_{t,\alpha_t}+\sum_{j=1}^{d}\pi_{\alpha_{t_-},j}(K_{t, j}-K_{t,\alpha_{t_-}}),\\
    &\mathcal{S}^K_{t,\alpha_t}=S_{t,\alpha_t}+K_{t,\alpha_t}B_{t,\alpha_t}+C^{\top}_{t,\alpha_t}K_{t,\alpha_t}D_{t,\alpha_t}, \\
    &\mathcal{R}^K_{t,\alpha_t}=R_{t,\alpha_t}+D^{\top}_{t,\alpha_t}K_{t,\alpha_t}D_{t,\alpha_t}, \\
    &\mathcal{G}^K_{T,\alpha_t}=G_{T,\alpha_T}-K_{T,\alpha_T}. 
\end{aligned}
\nonumber
\end{equation}
Now, we define a relaxed compensator as follows. 
\begin{definition}\label{def 2.1}
   If there exists $K_{\cdot, \alpha_{\cdot}}\in \varUpsilon $ such that $(\mathcal{Q}^K_{\cdot,\alpha_{\cdot}}, \mathcal{S}^K_{\cdot,\alpha_{\cdot}}, \mathcal{R}^K_{\cdot,\alpha_{\cdot}}, \mathcal{G}^K_{\cdot,\alpha_{\cdot}})$ 
    satisfies Assumption \ref{Asm 2.1}, then $K_{\cdot, \alpha_{\cdot}}$ is called a relaxed compensator of Problem (SLQ-MRS). 
\end{definition}
The existence of a relaxed compensator is crucial in solving the indeﬁnite problem. 
Subsequently, we establish a sufficient and necessary condition 
for the existence of a relaxed compensator, which can be rigorously verified through Schur's lemma. 
The detailed proof is omitted here for brevity. 
 \begin{lemma}\label{Prop 2.1}
  $K_{\cdot, \alpha_{\cdot}}\in \varUpsilon $ is a relaxed compensator of Problem (SLQ-MRS) 
    if and only if the following conditions hold
       \begin{enumerate}[label=(\roman*)]
        \item $\mathcal{Q}^K_{t,\alpha_t}-\mathcal{S}^K_{t,\alpha_t}(\mathcal{R}^K_{t,\alpha_t})^{-1} (\mathcal{S}^K_{t,\alpha_t})^{\top}\geq {\bf{0}},~~~t\in [0,T];$
        \item $\mathcal{R}^K_{t,\alpha_t}>{\bf{0}},~~~t\in [0,T]$;\ $\mathcal{G}^K_{T,\alpha_T}\geq \bf{0}$.
    \end{enumerate}
\end{lemma}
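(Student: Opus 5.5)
By Definition \ref{def 2.1}, $K_{\cdot,\alpha_\cdot}\in\varUpsilon$ is a relaxed compensator exactly when the quadruple $(\mathcal{Q}^K_{\cdot,\alpha_\cdot},\mathcal{S}^K_{\cdot,\alpha_\cdot},\mathcal{R}^K_{\cdot,\alpha_\cdot},\mathcal{G}^K_{\cdot,\alpha_\cdot})$ satisfies Assumption \ref{Asm 2.1}. So the plan is to unpack Assumption \ref{Asm 2.1} for this quadruple and match it, item by item, with conditions (i)--(ii). The matching is pointwise in $(t,\omega)$. Fix $t\in[0,T]$ and a realization, so that $\alpha_t=i$ and $\alpha_{t_-}$ are fixed. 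Then $\mathcal{Q}^K_{t,\alpha_t}$, $\mathcal{S}^K_{t,\alpha_t}$, $\mathcal{R}^K_{t,\alpha_t}$ are fixed matrices, and $\mathcal{G}^K_{T,\alpha_T}$ is a fixed matrix at the terminal time.

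First we check symmetry, so that Lemma \ref{lem 2.1} applies. Since $K_{t,j}\in\mathcal{S}^n$ and $\Theta_{t,\alpha_t}, Q_{t,\alpha_t}\in\mathcal{S}^n$, the matrix $\mathcal{Q}^K_{t,\alpha_t}$ is symmetric. Indeed, $K A+A^\top K$, $C^\top K C$ and $\sum_j\pi_{\alpha_{t_-},j}(K_{t,j}-K_{t,\alpha_{t_-}})$ are all symmetric. Likewise $\mathcal{R}^K_{t,\alpha_t}=R+D^\top K D$ is symmetric, and so is $\mathcal{G}^K_{T,\alpha_T}=G-K_T$. The block matrix in Assumption \ref{Asm 2.1} is therefore a genuine element of $\mathcal{S}^{n+m}$. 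We also note that all entries are bounded, because $K$, $\Theta$ and the coefficients are bounded. Hence the $L^\infty$ membership required in Assumption \ref{Asm 2.1} holds automatically for every $K\in\varUpsilon$, and only the sign conditions carry content.

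Next we apply Lemma \ref{lem 2.1} with $Q=\mathcal{Q}^K_{t,\alpha_t}$, $S=\mathcal{S}^K_{t,\alpha_t}$, $R=\mathcal{R}^K_{t,\alpha_t}$.

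For necessity, suppose $K$ is a relaxed compensator. Then $\mathcal{R}^K>\mathbf{0}$, which is the first part of (ii), and $\mathcal{G}^K\ge\mathbf{0}$, which is the second part of (ii). The block matrix is positive semidefinite with $\mathcal{R}^K>\mathbf{0}$, so direction (ii)$\Rightarrow$(i) of Lemma \ref{lem 2.1} gives $\mathcal{Q}^K-\mathcal{S}^K(\mathcal{R}^K)^{-1}(\mathcal{S}^K)^\top\ge\mathbf{0}$, which is (i).

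For sufficiency, assume (i) and (ii). Then $\mathcal{R}^K>\mathbf{0}$ and the Schur complement is nonnegative, so direction (i)$\Rightarrow$(ii) of Lemma \ref{lem 2.1} yields positive semidefiniteness of the block matrix. Together with $\mathcal{G}^K\ge\mathbf{0}$, this is all of Assumption \ref{Asm 2.1}.

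If one wants to recall why Lemma \ref{lem 2.1} holds, it comes from the congruence
\[
\begin{bmatrix} I & -SR^{-1}\\ \mathbf{0} & I\end{bmatrix}\begin{bmatrix} Q & S\\ S^\top & R\end{bmatrix}\begin{bmatrix} I & \mathbf{0}\\ -R^{-1}S^\top & I\end{bmatrix}=\begin{bmatrix} Q-SR^{-1}S^\top & \mathbf{0}\\ \mathbf{0} & R\end{bmatrix}.
\]

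The argument is essentially bookkeeping, and the only delicate point is interpretive. Assumption \ref{Asm 2.1} asks for $\mathcal{R}^K\in L^\infty(0,T;\mathcal{S}^m_{++})$, and the argument above uses strict positivity pointwise. If a uniform bound $\mathcal{R}^K\ge\delta I$ is intended, then (ii) must be read the same uniform way, and the equivalence goes through verbatim. I expect the main care to lie in stating that the inequalities hold for a.e.\ $t$ and a.s., with the jump term evaluated at $\alpha_{t_-}$.
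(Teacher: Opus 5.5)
Your proposal is correct and follows the route the paper indicates. The paper omits the proof, saying only that it "can be rigorously verified through Schur's lemma." Your argument supplies exactly that: unpack Definition \ref{def 2.1} into Assumption \ref{Asm 2.1} for the quadruple, apply Lemma \ref{lem 2.1} pointwise with $\mathcal{R}^K_{t,\alpha_t}>\mathbf{0}$, and carry $\mathcal{G}^K_{T,\alpha_T}\geq\mathbf{0}$ along unchanged.
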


\begin{theorem}\label{Thm 2.1}
If there exists a relaxed compensator $K_{\cdot, \alpha_{\cdot}}\in \varUpsilon$, then Problem (SLQ-MRS) is well-posed. 
\end{theorem}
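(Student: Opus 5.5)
The plan is to show that, for any admissible control, the original cost and the compensated cost differ only by a term that depends on the initial data:
\begin{equation}\label{eq:JKidentity}
\mathcal{J}(\xi,i_0;u_\cdot)=\mathcal{J}^K(\xi,i_0;u_\cdot)+\tfrac12\,\xi^\top K_{0,i_0}\xi .
\end{equation}
Since the relaxed compensator makes every weight in $\mathcal{J}^K$ positive semidefinite, we get $\mathcal{J}^K\ge 0$. Hence $\mathcal{J}\ge \tfrac12\xi^\top K_{0,i_0}\xi>-\infty$ uniformly in $u_\cdot\in\mathcal{U}$, and so $V(\xi,i_0)>-\infty$. This holds for both $\mathcal{U}_{ad}$ and $\mathcal{U}^c_{ad}$, since nothing uses the structure of the constraint set.

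\textbf{Step 1: It\^o's formula.} Fix $u_\cdot\in\mathcal{U}$ and let $x_\cdot$ be the state solving \eqref{Markov_eq1}. Standard estimates for the linear SDE with bounded coefficients give $x_\cdot\in\mathscr{M}^2_{\mathbb{F}}(0,T;\mathbb{R}^n)$. We apply It\^o's formula for semimartingales with jumps to $x_t^\top K_{t,\alpha_t}x_t$.

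\begin{itemize}
\item Since $x$ is continuous and $K$ jumps only through the $\widetilde M_{i,j}$, the only covariation term is $d\langle x\rangle$. It contributes $(C x+Du)^\top K(Cx+Du)\,dt$.
\item The drift of $K$ is $\Theta_{t,\alpha_t}+\sum_j\pi_{\alpha_{t-},j}(K_{t,j}-K_{t,\alpha_{t-}})$. Here one has to read the double sum in the definition of $\varUpsilon$ as acting on the current state $i=\alpha_{t-}$, matching the term in $\mathcal{Q}^K$.
\item The cross term gives $2x^\top K(Ax+Bu)\,dt$.
\item What remains are stochastic integrals against $dW$ and $d\widetilde M_{i,j}$.
\end{itemize}

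Collecting terms, the $dt$ integrand becomes
\[
x^\top\big(\Theta+KA+A^\top K+C^\top KC+\textstyle\sum_j\pi(K_j-K)\big)x+2x^\top(KB+C^\top KD)u+u^\top D^\top KDu .
\]
This is exactly the extra part that turns $(Q,S,R)$ into $(\mathcal{Q}^K,\mathcal{S}^K,\mathcal{R}^K)$.

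\textbf{Step 2: Take expectations.} Taking expectations with $x_0=\xi$, $\alpha_0=i_0$, adding the result to $\mathcal{J}$, and using $\mathcal{G}^K=G-K_T$ yields the identity \eqref{eq:JKidentity}.

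\textbf{Step 3: Positivity.} By Definition \ref{def 2.1}, or equivalently Lemma \ref{Prop 2.1}, the block matrix $[\mathcal{Q}^K\ \mathcal{S}^K;(\mathcal{S}^K)^\top\ \mathcal{R}^K]$ is positive semidefinite a.e. and $\mathcal{G}^K_T\ge 0$. Therefore $\mathcal{J}^K\ge 0$, which closes the argument.

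\textbf{Main obstacle.} The main technical point is justifying that the stochastic integrals have zero expectation. The $dW$-integrand is $2x^\top K(Cx+Du)$, and the jump integrands are $x_{t}^\top(K_{t,j}-K_{t,i})x_t$. Because $K\in L^\infty_{\mathbb{F}}$, $\sup_t|x_t|\in L^2$ and $u\in L^2_{\mathbb{F}}$, these integrands are square integrable in the needed sense. For instance, $\mathbb{E}\big(\int|x|^2|Cx+Du|^2dt\big)^{1/2}\le \mathbb{E}[\sup|x|\,(\int|Cx+Du|^2)^{1/2}]<\infty$, so the Burkholder--Davis--Gundy inequality makes them true martingales. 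If this is delicate, I would localize with stopping times $\tau_N$, take expectations, and pass $N\to\infty$ using dominated convergence, with $\sup_t|x_t|^2\in L^1$ as the dominating function.

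A second, minor care point is matching the generator term $\sum_j\pi_{\alpha_{t-},j}(K_{t,j}-K_{t,\alpha_{t-}})$ in the dynamics of $K$ with the one in $\mathcal{Q}^K$. Here the authors' notation is loose, and I would interpret both consistently via the compensator of $M_{i,j}$.
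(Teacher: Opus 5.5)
Your proposal is correct and follows the paper's own argument: apply It\^o's formula to $\langle K_{t,\alpha_t}x_t,x_t\rangle$, take expectations to get $\mathcal{J}(\xi,i_0;u_\cdot)=\mathcal{J}^K(\xi,i_0;u_\cdot)+\frac12\langle K_{0,i_0}\xi,\xi\rangle$, and use the positive semidefiniteness of the compensated weights to conclude $\mathcal{J}\ge\frac12\langle K_{0,i_0}\xi,\xi\rangle>-\infty$. You also fill in two points the paper leaves implicit. First, you show that the $dW$ and $d\widetilde M_{i,j}$ integrals have zero expectation, via $L^1$-type BDG bounds or localization with dominated convergence using $\sup_t|x_t|^2$; these are the right tools, since the quadratic integrands need not be square integrable when $u$ is only in $L^2$. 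Second, you read the generator term through the indicator of $\{\alpha_{t-}=i\}$, which is the correct interpretation of the paper's notation.
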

\begin{proof}
    For any $K_{\cdot, \alpha_{\cdot}}\in \varUpsilon$, applying It$\hat{{\rm o}}$'s formula to $\langle K_{t,\alpha_t}x_t, x_t \rangle $ on the interval $[0, T]$ and taking conditional expectation, 
    we get
    \begin{equation}\label{Markov_eq6}
        \begin{aligned}
            &\mathbb{E}[\langle K_{T,\alpha_T}x_T, x_T \rangle-\langle K_{0, \alpha_0}x_0 , x_0  \rangle|x_0=\xi, \alpha_0=i_0]\\
            &=\mathbb{E}[\langle (G_{T,\alpha_T}-\mathcal{G}^K_{T,\alpha_T})x_T, x_T - K_{0, \alpha_0}x_0 , x_0  \rangle|x_0=\xi, \alpha_0=i_0]\\
            &=\mathbb{E}\Big\{\int_{0}^{T}\big\{\langle (\mathcal{Q}^K_{t,\alpha_t}-Q_{t,\alpha_t})x_t, x_t \rangle+2\langle (\mathcal{S}^K_{t,\alpha_t}-S_{t,\alpha_t})u_t, x_t \rangle\\
            &\quad+\langle (\mathcal{R}^K_{t,\alpha_t}-R_{t,\alpha_t})u_t, u_t \rangle\big\}dt|x_0=\xi, \alpha_0=i_0\Big\}. \\
        \end{aligned}
    \end{equation} 
Combining \eqref{Markov_eq6} with the expression of \eqref{Markov_eq2}, we obtain 
    \begin{equation}\label{Markov_eq7}
\mathcal{J}(\xi ,i_0;u_{\cdot})=\mathcal{J}^K(\xi ,i_0;u_{\cdot})+\frac{1}{2}\langle K_{0, i_0}x_0 , x_0  \rangle. 
\end{equation}
According to the definition of the relaxed compensator, 
    $$\left[\begin{array}{c c}\mathcal{Q}^K_{t,\alpha_t}&\mathcal{S}^K_{t,\alpha_t}\\{(\mathcal{S}^K_{t,\alpha_t})}^{\top}&\mathcal{R}^K_{t,\alpha_t}\end{array}\right]\in L^\infty(0, T; \mathcal{S}^{n+m}_+) \ {\rm and}\  \mathcal{G}^K_{T,\alpha_T}\in \mathcal{S}^n_+, $$
    then $\mathcal{J}(\xi ,i_0;u_{\cdot})\geq \frac{1}{2}\langle K_{0, i_0}\xi , \xi  \rangle>-\infty, $
    which implies that Problem (SLQ-MRS) is well-posed.
\end{proof}
Equation $\eqref{Markov_eq7}$ implies that cost functionals 
$\mathcal{J}(\xi ,i_0;u_{\cdot})$ and $\mathcal{J}^K(\xi ,i_0;u_{\cdot})$ are equivalent. 
Solving Problem (SLQ-MRS) is equivalent to solving the system described by equations $\eqref{Markov_eq1}$ and $\eqref{Markov_eq7}$. 
Both problems share the same optimal control and state processes, yet yield different optimal values. 
The equivalence allows the original indefinite Problem (SLQ-MRS) 
to be effectively transformed into a positive definite one.
It is crucial to note that this conclusion is independent of 
whether the control is constrained or not; 
therefore, it is equally applicable to both Problem (SLQ-MRS-UC) 
and Problem (SLQ-MRS-CC). 
\begin{remark}\label{rem 2.1}
In contrast to the classical methods for solving indefinite SLQ problems 
(see \cite{li2002indefinite,liu2005near,zhang2021open}), 
the relaxed compensator method demonstrates two remarkable advantages: 
(i) The classical solution paths usually require directly solving CGREs, 
but these equations are still difficult to obtain analytical solutions in practice. 
The construction of the relaxed compensator 
is not restricted by strict conditions, 
and does not rely on the preconditions of 
the solvability of Riccati equations.
Therefore, it has greater flexibility and ingeniously avoids 
the complex derivation process of the solvability conditions 
of Riccati equations. 
(ii) The classical methods depending on Moore-Penrose pseudo-inverse operations need 
to handle the indefinite problems, while our method does not rely on 
these computationally intensive techniques.
\end{remark}
\section{\texorpdfstring{Stochastic Hamiltonian System with Markov \\Regime-Switching}{Stochastic Hamiltonian System with Markov Regime-Switching}}\label{sec: Hamiltonian}
In this section, we present a study of the stochastic Hamiltonian system 
for indefinite SLQ optimal control problems with Markov regime-switching. 
\subsection{\texorpdfstring{Stochastic Hamiltonian System with Markov \\Regime-Switching for Unconstrained Control}
{Stochastic Hamiltonian System with Markov Regime-Switching for Unconstrained Control}}\label{sec: Hamiltonian.1}

We employ an invertible linear transformation incorporating the relaxed compensator to 
address the corresponding stochastic Hamiltonian system for unconstrained control under the indefinite condition. 
And then, we derive an explicit representation of the unconstrained optimal control in an open-loop form for Problem (SLQ-MRS-UC).

The stochastic Hamiltonian system for Problem (SLQ-MRS-UC) 
is given by Stochastic maximum principle as follows, 
\begin{equation}\label{Markov_eq9}
    \begin{cases}
        \begin{aligned}
            \textbf{0}=R_{t,\alpha_t}u^*_t+S^{\top}_{t,\alpha_t}x^*_t+B^{\top}_{t,\alpha_t}q^*_t+D^{\top}_{t,\alpha_t}z^*_t,~~~t\in [0,T], \\
        \end{aligned}\\
        \begin{aligned}
            dx^*_t=[A_{t,\alpha_t}x^*_t+B_{t,\alpha_t}u^*_t]dt+[C_{t,\alpha_t}x^*_t+D_{t,\alpha_t}u^*_t]dW_t,~~~t\in [0,T], \\
        \end{aligned}\\
        \begin{aligned}
            -dq^*_t=[A^{\top}_{t,\alpha_t}q^*_t+C^{\top}_{t,\alpha_t}z^*_t+Q_{t,\alpha_t}x^*_t+S_{t,\alpha_t}u^*_t]dt
            -z^*_tdW_t-\eta^*_t\bullet d\widetilde{M}_t,~~~t\in [0,T], \\
        \end{aligned}\\
            x^*_0=\xi ,~\alpha_0=i_0,~q^*_T=G_{T,\alpha_T}x^*_T,\\
    \end{cases}
\end{equation}
where $\eta_{t}\bullet d\widetilde{M}_{t}=\sum_{i,j=1}^{d}\eta_{i,j}(t) d\widetilde{M}_{i,j}(t)$. 

Denote 
        $$\begin{aligned}
    &H(t, x_{t}, u_{t}, \alpha_t, q_{t}, z_{t};Q_{t,\alpha_t},S_{t,\alpha_t},R_{t,\alpha_t})\\
    &:=\frac{1}{2}\left[(x_t)^{\top}Q_{t,\alpha_t}x_t+2(x_t)^{\top}S_{t,\alpha_t}u_t+(u_t)^{\top}R_{t,\alpha_t}u_t\right]\\
    &\ \quad +(A_{t,\alpha_t}x_t+B_{t,\alpha_t}u_t)^{\top}q_t+(C_{t,\alpha_t}x_t+D_{t,\alpha_t}u_t)^{\top}z_t.
\end{aligned}$$
\begin{theorem}\label{Thm 3.1}
  If there exists a relaxed compensator $K_{\cdot, \alpha_{\cdot}}\in\varUpsilon $, 
    the stochastic Hamiltonian system of Problem (SLQ-MRS-UC) admits a unique solution 
    $(x^*_{\cdot}, u^*_{\cdot}, q^*_{\cdot}, z^*_{\cdot},\eta^* _{\cdot})\in L^2_{\mathbb{F}}(0, T; \mathbb{R}^n)\times \mathcal{U}_{ad}
    \times L^2_{\mathbb{F}}(0, T; \mathbb{R}^n)\times L^2_{\mathbb{F}}(0, T; \mathbb{R}^n)
    \times\mathscr{M}^2_{\mathbb{F}}(0, T; \mathbb{R}^n)$, where $(x^*_{\cdot},u^*_{\cdot})$ is the unique optimal pair for Problem (SLQ-MRS-UC). 
        \end{theorem}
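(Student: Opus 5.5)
We will reduce the indefinite Hamiltonian system \eqref{Markov_eq9} to the Hamiltonian system of the equivalent positive definite problem. Fix a relaxed compensator $K_{\cdot,\alpha_\cdot}\in\varUpsilon$ and introduce the invertible linear change of adjoint variables
\[
\hat q_t:=q_t-K_{t,\alpha_t}x_t,\qquad \hat z_t:=z_t-K_{t,\alpha_t}(C_{t,\alpha_t}x_t+D_{t,\alpha_t}u_t),
\]
with $\hat\eta_{i,j}(t):=\eta_{i,j}(t)-(K_{t,j}-K_{t,i})x_{t}$. This transformation is invertible: given $(x,u)$, one recovers $(q,z,\eta)$ from $(\hat q,\hat z,\hat\eta)$. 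Since $K$ is bounded, it preserves the spaces $L^2_{\mathbb{F}}\times L^2_{\mathbb{F}}\times\mathscr{M}^2_{\mathbb{F}}$.

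\textbf{Step 1 (transformed system).} Apply It\^o's formula with Markov jumps to $K_{t,\alpha_t}x_t$, using the semimartingale decomposition of $K$ in the definition of $\varUpsilon$. The jump part of $K$ produces exactly the $\widetilde M$-integrand $(K_{t,j}-K_{t,i})x_t$ and the compensator term $\sum_j\pi_{\alpha_{t-},j}(K_{t,j}-K_{t,\alpha_{t-}})x_t$. Substituting into \eqref{Markov_eq9}, we check that $(x,u,q,z,\eta)$ solves \eqref{Markov_eq9} if and only if $(x,u,\hat q,\hat z,\hat\eta)$ solves the Hamiltonian system with $(Q,S,R,G)$ replaced by $(\mathcal{Q}^K,\mathcal{S}^K,\mathcal{R}^K,\mathcal{G}^K)$. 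That system is
\[
\mathbf 0=\mathcal R^K u+(\mathcal S^K)^\top x+B^\top\hat q+D^\top \hat z,\quad -d\hat q=[A^\top\hat q+C^\top\hat z+\mathcal Q^Kx+\mathcal S^Ku]dt-\hat z\,dW-\hat\eta\bullet d\widetilde M,
\]
with $\hat q_T=\mathcal G^K_{T,\alpha_T}x_T$. The stationarity condition transforms correctly because $B^\top Kx+D^\top K(Cx+Du)$ accounts for the differences $\mathcal S^K-S$ and $\mathcal R^K-R$. This bookkeeping, in particular matching the $\Theta$ and jump-compensator terms so that they land in $\mathcal Q^K$, is the main computational obstacle. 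It mirrors the computation behind \eqref{Markov_eq6}.

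\textbf{Step 2 (solving the definite system).} By Definition \ref{def 2.1} and Lemma \ref{Prop 2.1}, $\mathcal R^K\ge\delta I$ for some $\delta>0$ (uniform positivity is understood in the $L^\infty$ sense), the block matrix is positive semidefinite, and $\mathcal G^K\ge0$. Hence $u\mapsto\mathcal J^K(\xi,i_0;u)$ is a continuous, strictly convex, coercive quadratic functional on the Hilbert space $\mathcal U_{ad}$. Coercivity follows from $\mathcal J^K\ge \frac{\delta}{2}\mathbb E\int|u|^2dt$ plus a term that is bounded and linear in $\xi$. By \eqref{Markov_eq7}, $\mathcal J$ and $\mathcal J^K$ differ by a constant, so a unique minimizer $u^*$ of $\mathcal J$ exists.

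Compute the G\^ateaux derivative of $\mathcal J^K$ via duality with the transformed adjoint BSDE, which is linear with bounded coefficients and uniquely solvable with jumps as in Zhang et al. \cite{zhang2021open}. Setting this derivative to zero shows that $u^*$ satisfies the transformed stationarity condition. Conversely, if any solution satisfies the transformed system, convexity gives $\mathcal J^K(u)-\mathcal J^K(u^*)\ge0$ for all $u$, so that $u$ is optimal. Strict convexity then forces uniqueness of $(x^*,u^*)$, and uniqueness of the BSDE solution forces uniqueness of $(\hat q,\hat z,\hat\eta)$.

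\textbf{Step 3 (back-transformation).} Inverting the transformation of Step 1 yields existence and uniqueness of $(x^*,u^*,q^*,z^*,\eta^*)$ for \eqref{Markov_eq9} in the stated spaces, with $(x^*,u^*)$ the unique optimal pair for Problem (SLQ-MRS-UC).
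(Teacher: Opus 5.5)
Your proof is correct in substance, but after the common first step it follows a different route from the paper. Your Step 1 is exactly the paper's invertible change of variables \eqref{Markov_eq16}, and your bookkeeping agrees with it.

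The routes then diverge.
\begin{itemize}
\item \textbf{The paper.} It uses $\mathcal{R}^K>\mathbf{0}$ to solve \eqref{eq10} for $u^{K*}$, substitutes into \eqref{Markov_eq13} to get a fully coupled linear FBSDE with regime switching, and cites Tao and Wu for its unique solvability. Optimality is checked afterwards by a convexity (sufficient maximum principle) computation, and uniqueness of the optimal pair is asserted rather than argued.
\item \textbf{Your route.} You use the direct method. You take the unique minimizer of $\mathcal{J}^K$ on $L^2_{\mathbb{F}}(0,T;\mathbb{R}^m)$ and obtain the transformed Hamiltonian system as its first-order condition, via a decoupled linear BSDE driven by $W$ and $\widetilde{M}$. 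Uniqueness for that system follows because every solution is a minimizer.
\end{itemize}
Your route needs only decoupled BSDE solvability (martingale representation). It avoids fully coupled FBSDE theory and the need to check its hypotheses, and it genuinely proves uniqueness of the optimal pair. The paper's route delivers the result directly as an FBSDE solvability statement, which it reuses when discussing systems that violate the monotonicity condition, but it rests entirely on the citation.

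One step needs repair. Your bound $\mathcal{J}^K(\xi,i_0;u)\ge\frac{\delta}{2}\mathbb{E}\int_0^T|u_t|^2dt$ plus a term linear in $\xi$ is false when $\mathcal{S}^K\neq\mathbf{0}$. Completing the square with $\mathcal{Q}^K-\mathcal{S}^K(\mathcal{R}^K)^{-1}(\mathcal{S}^K)^\top\ge\mathbf{0}$ only bounds the integrand below by $\delta|u_t+(\mathcal{R}^K_{t,\alpha_t})^{-1}(\mathcal{S}^K_{t,\alpha_t})^\top x_t|^2$, which can vanish with $u_t\neq 0$. For a concrete counterexample, take scalar $dx_t=u_t\,dt$, $x_0=0$, integrand $(x_t-u_t)^2$, $\mathcal{G}^K=0$ and $x_t=e^t-1$. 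The cost is then $T/2$, whereas $\frac{\delta}{2}\int_0^Tu_t^2dt=\frac{e^{2T}-1}{4}$ is strictly larger.

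The standard fix runs as follows.
\begin{itemize}
\item For $\xi=0$, put $v_t:=u_t+(\mathcal{R}^K_{t,\alpha_t})^{-1}(\mathcal{S}^K_{t,\alpha_t})^\top x_t$. Then $x$ solves a linear SDE driven by $v$ with bounded coefficients; this is where you need the uniform positivity of $\mathcal{R}^K$ that you assumed.
\item Hence $\mathbb{E}\int_0^T|u_t|^2dt\le C\,\mathbb{E}\int_0^T|v_t|^2dt$, and so $\mathcal{J}^K(0,i_0;u)\ge\lambda\,\mathbb{E}\int_0^T|u_t|^2dt$ for some $\lambda>0$.
\item For general $\xi$, write $\mathcal{J}^K(\xi,i_0;u)=\mathcal{J}^K(0,i_0;u)+\mathcal{J}^K(\xi,i_0;0)$ plus a cross term bounded by $C|\xi|\,\|u\|$.
\end{itemize}
This gives the coercivity and uniform (hence strict) convexity your Step 2 relies on. With that in place, the rest of your argument goes through.
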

\begin{proof}
    For a given $K_{\cdot, \alpha_{\cdot}}\in\varUpsilon $, 
we introduce the following new stochastic Hamiltonian system:
    \begin{equation}\label{Markov_eq13}
        \begin{cases}
                   \begin{aligned}
                &dx^{K*}_t=[A_{t, \alpha_t}x^{K*}_t+B_{t, \alpha_t}u^{K*}_t]dt+[C_{t, \alpha_t}x^{K*}_t+D_{t, \alpha_t}u^{K*}_t]dW_t,~~~t\in [0,T], \\
                &-dq^{K*}_t=[A^{\top}_{t, \alpha_t}q^{K*}_t+C^{\top}_{t, \alpha_t}z^{K*}_t+\mathcal{Q}^K_{t,\alpha_t}x^{K*}_t+\mathcal{S}^K_{t,\alpha_t}u^{K*}_t]dt\\
                &\ \qquad\qquad-z^{K*}_tdW_t-\eta^{K*}_t\bullet d\widetilde{M}_t,~~~t\in [0,T], \\
            \end{aligned}\\
            x^{K*}_0=\xi ,~\alpha_0=i_0,~q^{K*}_T=\mathcal{G}^K_{T,\alpha_T}x^{K*}_T,\\
        \end{cases}
    \end{equation}
    and
 \begin{equation}\label{eq10}
 {\bf{0}}=\mathcal{R}^K_{t,\alpha_t}u^{K*}_t+{(\mathcal{S}^K_{t,\alpha_t})}^{\top}x^{K*}_t+B^{\top}_{t, \alpha_t}q^{K*}_t+D^{\top}_{t, \alpha_t}z^{K*}_t.
\end{equation}            
For any $(x^K_{\cdot}, u^K_{\cdot})\in L^2_{\mathbb{F}}(0, T; \mathbb{R}^n)\times \mathcal{U}_{ad}$,  
the adapted solution to the first equation in $\eqref{Markov_eq13}$ admits a unique solution, see Li and Zheng \cite{li2015weak}. 
    According to \eqref{Markov_eq9} and \eqref{Markov_eq13}, we get the following equivalence relationships
    \begin{equation}\label{Markov_eq16}
        \begin{aligned}
            &x^*_t=x^{K*}_t, u^*_t=u^{K*}_t, q^*_t=q^{K*}_t+K_{t, \alpha_t}x^*_t,\\
            &z^*_t=z^{K*}_t+K_{t, \alpha_t}D_{t, \alpha_t}u^*_t+K_{t, \alpha_t}C_{t, \alpha_t}x^*_t,\\
            &\eta ^*_{i,j}=\eta ^{K*}_{i,j}+[K_{t, j}-K_{t, i}]x^*_t,\\
        \end{aligned}
    \end{equation}
    which solves the stochastic Hamiltonian system \eqref{Markov_eq9} of Problem (SLQ-MRS-UC). 
    If $K_{\cdot, \alpha_{\cdot}}\in\varUpsilon$ is the relaxed compensator, 
    we solve $u^{K*}_{\cdot}$ in the following open-loop form 
    from (\ref{eq10})  by Theorem 3.4 in Tao and Wu \cite{tao2012maximum}
    \begin{equation}\label{Markov_eq14}
            \begin{aligned}
                u^{K*}_t=-{(\mathcal{R}^K_{t, \alpha_t})}^{-1}\bigl[{(\mathcal{S}^K_{t, \alpha_t})}^\top x^{K*}_t+B_{t, \alpha_t}^\top q^{K*}_t+D^\top_{t, \alpha_t}z^{K*}_t\bigr]. \\
            \end{aligned}
        \end{equation} 
     Substituting $\eqref{Markov_eq14}$ into $\eqref{Markov_eq13}$, 
        then $\eqref{Markov_eq13}$ is reduced to a fully coupled linear FBSDE 
        with Markov regime-switching,
       which          admits a unique solution 
        $(x^{K*}_{\cdot}, q^{K*}_{\cdot}, z^{K*}_{\cdot}, \eta^{K*} _{\cdot})
        \in L^2_{\mathbb{F}}(0, \\T; \mathbb{R}^n)
        \times L^2_{\mathbb{F}}(0, T; \mathbb{R}^n)
        \times L^2_{\mathbb{F}}(0, T; \mathbb{R}^n)
        \times\mathscr{M}^2_{\mathbb{F}}(0, T; \mathbb{R}^n)$ and 
 is a special case of the relevant arguments 
        in the literature of Tao and Wu \cite{tao2012maximum}. 
       Then stochastic Hamiltonian system \eqref{Markov_eq13}-\eqref{eq10} admits a unique solution 
        $(x^{K*}_{\cdot}, u^{K*}_{\cdot}, q^{K*}_{\cdot}, z^{K*}_{\cdot}, \eta^{K*} _{\cdot})
        \in L^2_{\mathbb{F}}(0, T; \mathbb{R}^n)
        \times \mathcal{U}_{ad}
        \times L^2_{\mathbb{F}}(0, T; \mathbb{R}^n)
        \times L^2_{\mathbb{F}}(0, T; \mathbb{R}^n)
        \times\mathscr{M}^2_{\mathbb{F}}(0, T; \mathbb{R}^n)$.
    Since $\eqref{Markov_eq16}$ is invertible, 
        it follows from the existence and uniqueness of $\eqref{Markov_eq13}$ that 
        the solution of \eqref{Markov_eq9} exists and is unique. 
        Thus, the stochastic Hamiltonian system \eqref{Markov_eq9} of Problem (SLQ-MRS-UC) admits a unique solution 
    $(x^*_{\cdot}, u^*_{\cdot}, q^*_{\cdot}, z^*_{\cdot},\eta^* _{\cdot})\in L^2_{\mathbb{F}}(0, T; \mathbb{R}^n)\times \mathcal{U}_{ad}
    \times L^2_{\mathbb{F}}(0, T; \mathbb{R}^n)\times L^2_{\mathbb{F}}(0, T; \mathbb{R}^n)
    \times\mathscr{M}^2_{\mathbb{F}}(0, T; \mathbb{R}^n)$. 
    Additionally, if another relaxed compensator $\widetilde{K}_{\cdot, \alpha_{\cdot}}\in\varUpsilon$ exists, the Hamiltonian system \eqref{Markov_eq13}-\eqref{eq10} with $\widetilde{K}_{\cdot, \alpha_{\cdot}}$ admits a unique solution, 
        so the Hamiltonian system \eqref{Markov_eq13}-\eqref{eq10} with $K_{\cdot, \alpha_{\cdot}}$ admits a unique solution either. 
        That is to say the existence and uniqueness of the Hamiltonian system $\eqref{Markov_eq13}$ with $\widetilde{K}_{\cdot, \alpha_{\cdot}}$ and $K_{\cdot, \alpha_{\cdot}}$ are equivalent, 
        i.e. $(x^{K*}_{\cdot}, u^{K*}_{\cdot})=(x^{\widetilde{K}*}_{\cdot}, u^{\widetilde{K}*}_{\cdot})=(x^*_{\cdot}, u^*_{\cdot})$. 

        From Theorem \ref{Thm 2.1}, we prove that $u^*_{\cdot}$ is the unique optimal control for Problem (SLQ-MRS-UC) 
        by proving that $u^{K*}_{\cdot}$ is the unique optimal control for the system described by equations $\eqref{Markov_eq1}$ and $\eqref{Markov_eq7}$.

        Under Assumption \ref{Asm 2.1}, $\mathcal{J}^K(u^K_{\cdot})$ is convex, so
\begin{equation}
    \begin{aligned}
        &\mathcal{J}^K(u^{K*}_{\cdot})-\mathcal{J}^K(u^K_{\cdot})\\
        &\le \mathbb{E}\Big\{\int_{0}^{T}\Big[\big\langle -H_x(t, x^K_{t}, u^K_{t}, \alpha_t, q^K_{t}, z^K_{t};\mathcal{Q}^K_{t,\alpha_t},\mathcal{S}^K_{t,\alpha_t},\mathcal{R}^K_{t,\alpha_t}),x^K_{t}-x^{K*}_{t} \big\rangle\\
        & \quad+\big\langle (A_{t,\alpha_t}x^K_{t}+B_{t,\alpha_t}u^K_{t})^{\top}-(A_{t,\alpha_t}x^{K*}_{t}+B_{t,\alpha_t}u^{K*}_{t})^{\top}, q^K_{t} \big\rangle\\
        & \quad+\big\langle (C_{t,\alpha_t}x^K_{t}+D_{t,\alpha_t}u^K_{t})^{\top}-(C_{t,\alpha_t}x^{K*}_{t}D_{t,\alpha_t}u^{K*}_{t})^{\top}, z^K_{t} \big\rangle \Big]dt\\
        & \quad-\big\langle \frac{1}{2}[x^{\top}_T \mathcal{G}^K_{T,\alpha_T}x_T+x^{\top}_0 K_{0, \alpha_0}x_0], x^K_{T}-x^{K*}_{T} \big\rangle |x_0=\xi, \alpha_0=i_0\Big\}=0.
    \end{aligned}
    \nonumber
\end{equation}
Consequently, $\mathcal{J}^K(u^{K*}_{\cdot})-\mathcal{J}^K(u^K_{\cdot})\le 0$, which implies $u^*_{\cdot}$ is an optimal control of Problem (SLQ-MRS-UC). 
Moreover, $(x^*_{\cdot},u^*_{\cdot})$ is the unique optimal pair of Problem (SLQ-MRS-UC). 
    \end{proof}

\subsection{\texorpdfstring{Stochastic Hamiltonian System with Markov \\Regime-Switching for Constrainted Control}{Stochastic Hamiltonian System with Markov Regime-Switching for Constrainted Control}}\label{sec: Hamiltonian.2}
In this subsection, we address Problem (SLQ-MRS-CC) by developing generalized stochastic Hamiltonian systems, 
thereby characterizing optimal open-loop strategies for non-negative constrained cases. 

We introduce a generalized stochastic Hamiltonian system for Problem (SLQ-MRS-CC) as follows, for $t\in [0,T]$, 
\begin{equation}\label{Markov_eq26}
    \begin{cases}
        \begin{aligned}
            dx^*_t=&[A_{t,\alpha_t}x^*_t+B_{t,\alpha_t}u^*_t]dt+[C_{t,\alpha_t}x^*_t+D_{t,\alpha_t}u^*_t]dW_t,\\
        \end{aligned} \\
        \begin{aligned}
            -dq^*_t=&[A_{t,\alpha_t}q^*_t+C_{t,\alpha_t}z^*_t+Q_{t,\alpha_t}x^*_t+S_{t,\alpha_t}u^*_t]dt-z^*_tdW_t-\eta^*_t\bullet d\widetilde{M}_t, \\
        \end{aligned} \\
            x^*_0=\xi ,~\alpha_0=i_0,~q^*_T=G_{T,\alpha_T}x^*_T,\\
           \end{cases}
\end{equation}
and
\begin{equation}
 \begin{aligned}\label{eq16g}
           &H(t, x^*_{t}, u^*_{t}, \alpha_t, q^*_{t}, z^*_{t};Q_{t,\alpha_t},S_{t,\alpha_t},R_{t,\alpha_t})\\
           &\ \ \quad=\max_ {u\in\mathbb{R}^{m}_+}H(t, x^*_{t}, u, \alpha_t, q^*_{t},  z^*_{t};Q_{t,\alpha_t},S_{t,\alpha_t},R_{t,\alpha_t}).
        \end{aligned}\\
\end{equation} 
\begin{theorem}\label{Thm Hamiltonian.2.1}
    If there exists a relaxed compensator $K_{\cdot, \alpha_{\cdot}}\in\varUpsilon $, 
    the  generalized stochastic Hamiltonian system \eqref{Markov_eq26}-\eqref{eq16g} of Problem (SLQ-MRS-CC) admits a unique solution 
    $(x^*_{\cdot}, u^*_{\cdot}, q^*_{\cdot}, z^*_{\cdot},\eta^* _{\cdot})\in L^2_{\mathbb{F}}(0, T; \mathbb{R}^n)\times \mathcal{U}^c_{ad}
    \times L^2_{\mathbb{F}}(0, T; \mathbb{R}^n)\times L^2_{\mathbb{F}}(0, T; \mathbb{R}^n)
    \times\mathscr{M}^2_{\mathbb{F}}(0, T; \mathbb{R}^n)$, where $(x^*_{\cdot},u^*_{\cdot})$ is the unique optimal pair for Problem (SLQ-MRS-CC). 
\end{theorem}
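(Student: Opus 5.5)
We mirror the proof of Theorem \ref{Thm 3.1}, with the first-order condition replaced by a projection onto the cone $\mathbb{R}^m_+$. Fix a relaxed compensator $K_{\cdot,\alpha_\cdot}\in\varUpsilon$ and introduce the transformed generalized Hamiltonian system. Its forward--backward part is \eqref{Markov_eq13}, with $(\mathcal{Q}^K,\mathcal{S}^K,\mathcal{G}^K)$ in place of $(Q,S,G)$. The optimality condition is a pointwise extremum of $H(t,x^{K*}_t,u,\alpha_t,q^{K*}_t,z^{K*}_t;\mathcal{Q}^K_{t,\alpha_t},\mathcal{S}^K_{t,\alpha_t},\mathcal{R}^K_{t,\alpha_t})$ over $u\in\mathbb{R}^m_+$. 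Since $\mathcal{R}^K_{t,\alpha_t}>\mathbf{0}$ by Lemma \ref{Prop 2.1}, this is a strictly convex quadratic in $u$, so the extremum over the closed convex cone is attained at a unique point. Indeed, it is the first-order (sign) condition that is meant, i.e.\ the minimum, consistent with the maximum principle convention. The variational inequality
\[
\big\langle \mathcal{R}^K_{t,\alpha_t}u^{K*}_t+(\mathcal{S}^K_{t,\alpha_t})^{\top}x^{K*}_t+B^{\top}_{t,\alpha_t}q^{K*}_t+D^{\top}_{t,\alpha_t}z^{K*}_t,\ u-u^{K*}_t\big\rangle\ge 0,\quad \forall u\in\mathbb{R}^m_+,
\]
characterizes this point. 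Equivalently, $u^{K*}_t=\Pi_t\big(-(\mathcal{R}^K_{t,\alpha_t})^{-1}[(\mathcal{S}^K_{t,\alpha_t})^{\top}x^{K*}_t+B^{\top}_{t,\alpha_t}q^{K*}_t+D^{\top}_{t,\alpha_t}z^{K*}_t]\big)$, where $\Pi_t$ is the projection onto $\mathbb{R}^m_+$ in the $\mathcal{R}^K_{t,\alpha_t}$-inner product. Because $\mathcal{R}^K$ is bounded and uniformly positive definite, this map is Lipschitz in $(x,q,z)$ with a uniform constant.

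Next we verify that the change of variables \eqref{Markov_eq16} maps solutions of the transformed system to solutions of \eqref{Markov_eq26}--\eqref{eq16g} and back. Apply It\^o's formula with jumps to $K_{t,\alpha_t}x^*_t$, using the dynamics defining $\varUpsilon$. This shows the backward equation transforms exactly as in Theorem \ref{Thm 3.1}, with the terms $\Theta$, $KA+A^{\top}K$, $C^{\top}KC$ and $\sum_j\pi(K_j-K_i)$ absorbed into $\mathcal{Q}^K$. The terminal condition $q^*_T=G x^*_T$ becomes $q^{K*}_T=\mathcal{G}^Kx^{K*}_T$. For the optimality condition, substitute $q^*=q^{K*}+Kx^*$ and $z^*=z^{K*}+KCx^*+KDu^*$ into the $u$-gradient of $H$. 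The $u$-gradient of $H(\cdot;Q,S,R)$ at $(x^*,q^*,z^*)$ then equals the $u$-gradient of $H(\cdot;\mathcal{Q}^K,\mathcal{S}^K,\mathcal{R}^K)$ at $(x^{K*},q^{K*},z^{K*})$. Hence the two variational inequalities on $\mathbb{R}^m_+$ coincide, and so do the two pointwise extremal conditions. Since \eqref{Markov_eq16} is invertible, existence and uniqueness for one system is equivalent to existence and uniqueness for the other.

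The main obstacle is existence and uniqueness for the transformed system. After substituting the projection feedback, it becomes a fully coupled FBSDE with Markov switching and Lipschitz but nonlinear coupling, so the linear result of Tao and Wu \cite{tao2012maximum} cannot be invoked verbatim. We would obtain existence and uniqueness from the control problem instead.
\begin{itemize}
\item[(i)] Existence of an optimal control. By Lemma \ref{Prop 2.1}, $\mathcal{J}^K$ is strictly convex in $u$ and coercive, since $\mathcal{J}^K(u)\ge c\,\mathbb{E}\int_0^T|u_t|^2dt-C|\xi|^2$, using $\mathcal{R}^K\ge\delta I$ and the Schur complement bound. It is also continuous on the closed convex set $\mathcal{U}^c_{ad}$ of the Hilbert space $L^2_{\mathbb{F}}$. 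Hence it has a unique minimizer, which is also the unique minimizer of $\mathcal{J}$ by \eqref{Markov_eq7}.
\item[(ii)] Necessity. Let $(q^{K*},z^{K*},\eta^{K*})$ solve the linear adjoint BSDE with jumps, which is well posed by standard theory. A convex perturbation $u^*+\varepsilon(v-u^*)$ with $v\in\mathcal{U}^c_{ad}$, followed by a duality argument, gives the integrated variational inequality. Localizing it gives the pointwise one, hence existence of a solution.
\item[(iii)] Sufficiency and uniqueness. Given any solution of the generalized system, the convexity estimate of Theorem \ref{Thm 3.1}, with the last step now using the variational inequality instead of an equality, yields $\mathcal{J}^K(u^{K*})\le\mathcal{J}^K(u^K)$ for every $u^K\in\mathcal{U}^c_{ad}$. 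So the control component of any solution is the unique optimal control. The forward equation then fixes $x^*$, and uniqueness for the linear BSDE fixes $(q^*,z^*,\eta^*)$.
\end{itemize}
Together, (i)--(iii) prove the theorem.
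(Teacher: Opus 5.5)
\textbf{The gap.} The sentence ``and so do the two pointwise extremal conditions'' is false. Reading \eqref{eq16g} as a first-order condition is the right instinct, but it does not justify this sentence. Write $H(u)$ for $H(t,x^*_t,u,\alpha_t,q^*_t,z^*_t;Q_{t,\alpha_t},S_{t,\alpha_t},R_{t,\alpha_t})$, and $H^K(u)$ for the transformed Hamiltonian at $(x^{K*}_t,q^{K*}_t,z^{K*}_t)$. Substituting $q^*=q^{K*}+Kx^*$ and $z^*=z^{K*}+KCx^*+KDu^*$ gives
\[
H(u)=H^K(u)-\tfrac12\,(u-u^*_t)^\top D^\top_{t,\alpha_t}K_{t,\alpha_t}D_{t,\alpha_t}(u-u^*_t)+c_t,
\]
where $c_t$ does not depend on $u$.

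So the gradients agree at $u^*_t$, and the variational inequalities coincide, as you say. But the two functions of $u$ differ by a non-constant quadratic, so global extremality over $\mathbb{R}^m_+$ does not transfer. The $u$-Hessian of $H$ is $R_{t,\alpha_t}$, which may be indefinite.
\begin{enumerate}[label=(\alph*)]
\item With $R_{t,1}=-4$ as in Section \ref{sec: 5}, $\inf_{u\ge 0}H(u)=-\infty$. So a min-version of \eqref{eq16g} has no solution at all.
\item The literal max-version forces $\langle g_t,u-u^*_t\rangle\le 0$ for all $u\in\mathbb{R}^m_+$, where $g_t=R_{t,\alpha_t}u^*_t+S^\top_{t,\alpha_t}x^*_t+B^\top_{t,\alpha_t}q^*_t+D^\top_{t,\alpha_t}z^*_t$. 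Combined with the true optimality condition $\langle g_t,u-u^*_t\rangle\ge 0$, this forces $g_t=0$. That fails wherever the constraint is genuinely active.
\end{enumerate}

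Hence your argument proves existence and uniqueness for \eqref{Markov_eq26} coupled with the variational inequality. Equivalently, $u^*_t$ minimizes $H^K$, i.e.\ $H$ corrected by $\tfrac12(u-u^*_t)^\top D^\top KD(u-u^*_t)$. It does not prove the result for \eqref{eq16g} as written. State that reformulation explicitly as what you prove, and delete the claimed equivalence. It cannot be patched, because the literal statement fails in the indefinite case. The paper's one-line appeal to Theorem \ref{Thm 3.1} does not address this point either.

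\textbf{Comparison with the paper.} Apart from this, your route differs from the paper's and is more careful. The paper's omitted proof repeats Theorem \ref{Thm 3.1}: transform by \eqref{Markov_eq16}, cite Tao--Wu for the transformed FBSDE, then check optimality by convexity. You correctly note that after inserting the projection the transformed FBSDE is only Lipschitz, not linear, so that citation does not apply. You replace it with three steps:
\begin{enumerate}[label=(\arabic*)]
\item the direct method, giving a unique minimizer of the uniformly convex, coercive $\mathcal{J}^K$ on the closed convex set $\mathcal{U}^c_{ad}$;
\item the convex-perturbation necessary condition, which produces a solution;
\item the sufficiency estimate, which gives uniqueness.
\end{enumerate}
This gives a self-contained proof that needs no FBSDE solvability theory. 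The price is the uniform bound $\mathcal{R}^K\ge\delta I$, which Assumption \ref{Asm 2.1} as written (pointwise values in $\mathcal{S}^m_{++}$) does not provide. You should state it as an explicit hypothesis; the paper tacitly needs it too whenever it inverts $\mathcal{R}^K$.
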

The proof is similar to the Theorem \ref{Thm 3.1}, we omit it.

One of the  key contributions of this paper is to enable the solution of a class of Markov regime-switching diffusion-based FBSDEs 
with control constraints under broader conditions, especially in cases where the conventional monotonicity condition is violated.
We examine the generalized stochastic Hamiltonian system of Problem (SLQ-MRS-CC) with $R_{t,\alpha_t}<{\bf{0}}$ and $R_{t,\alpha_t}={\bf{0}}$. 

When $R_{t,\alpha_t}<{\bf{0}}$, two cases need to be discussed:(i)If $B^{\top}_{t, \alpha_t}q^*_t+D^{\top}_{t, \alpha_t}z^*_t+S^{\top}_{t, \alpha_t}x^*_{t}>{\bf{0}}$, 
then the optimal control is 
$u^*_t=-R_{t, \alpha_t}^{-1}\bigl[B^{\top}_{t, \alpha_t}q^*_t+D^{\top}_{t, \alpha_t}z^*_t+S^{\top}_{t, \alpha_t}x^*_{t}\bigr]. $
Thus, (\ref{Markov_eq26}) 
can be rewritten as a fully coupled linear FBSDE that admits a unique solution from Theorem \ref{Thm Hamiltonian.2.1}, but
does not satisfy the classic monotonicity condition.
(ii) If $B^{\top}_{t, \alpha_t}q^*_t+D^{\top}_{t, \alpha_t}z^*_t+S^{\top}_{t, \alpha_t}x^*_{t}\le {\bf{0}}$, 
then the optimal control is $u^*_t={\bf{0}}$. 
Thus, the generalized stochastic Hamiltonian system of Problem (SLQ-MRS-CC) can be rewritten as the following FBSDE: 
\begin{equation}\label{Markov 2FBSDE}
    \begin{cases}
        \begin{aligned}
            &dx^*_t=A_{t,\alpha_t}x^*_tdt+C_{t,\alpha_t}x^*_tdW_t,~~~t\in [0,T], \\
            &-dq^*_t=[A_{t,\alpha_t}q^*_t+C_{t,\alpha_t}z^*_t+Q_{t,\alpha_t}x^*_t]dt-z^*_tdW_t-\eta^*_t\bullet d\widetilde{M}_t,~~~t\in [0,T], \\
            &x^*_0=\xi ,~\alpha_0=i_0,~q^*_T=G_{T,\alpha_T}x^*_T.\\
        \end{aligned}
    \end{cases}
\end{equation}
From Theorem \ref{Thm Hamiltonian.2.1}, 
\eqref{Markov 2FBSDE} admits a unique solution.

When $R_{t,\alpha_t}={\bf{0}}$, the generalized stochastic Hamiltonian system of Problem (SLQ-MRS-CC) is not a classical FBSDE. 
From Theorem \ref{Thm Hamiltonian.2.1}, the generalized stochastic Hamiltonian system of Problem (SLQ-MRS-CC) admits a unique solution 
in this case and the optimal control is $u^*_t={\bf{0}}$. 

\section{Riccati Equation with Markov Regime-Switching}\label{sec: Riccati}
This section investigates Riccati equations for indefinite SLQ control problems with Markov regime-switching. 
\subsection{Riccati Equation with Markov Regime-Switching for Unconstrained Control}\label{sec: Riccati.1}
We establish the relationship between corresponding Hamiltonian system 
and Riccati equation for Problem (SLQ-MRS-UC).
By using the relaxed compensator method, 
we obtain the existence and uniqueness of 
the solution to the corresponding Riccati equation. 
Based on this, we derived the closed-loop optimal 
control form of Problem (SLQ-MRS-UC).

Riccati equation of Problem (SLQ-MRS-UC) is as follows
\begin{equation}\label{Markov_eq25}
    \begin{cases}
        \begin{aligned}
        &{\dot P}_{t, \alpha_t}+P_{t, \alpha_t}A_{t, \alpha_t}+A^{\top}_{t, \alpha_t}P_{t, \alpha_t}+C^{\top}_{t, \alpha_t}P_{t, \alpha_t}C_{t, \alpha_t}+Q_{t, \alpha_t}+\sum_{j=1}^{d}\pi_{\alpha_t,j}P_{t, j}\\
        &\quad -[P_{t, \alpha_t}B_{t, \alpha_t}+S_{t, \alpha_t}+C^{\top}_{t, \alpha_t}P_{t, \alpha_t}D_{t, \alpha_t}][R_{t, \alpha_t}+D^{\top}_{t, \alpha_t}P_{t, \alpha_t}D_{t, \alpha_t}]^{-1}\\
        &\ \qquad [B^{\top}_{t, \alpha_t}P_{t, \alpha_t}+D^{\top}_{t, \alpha_t}P_{t, \alpha_t}C_{t, \alpha_t}+S^{\top}_{t, \alpha_t}]={\textbf{0}},~~~(t,\alpha_t)\in [0,T]\times\mathbb{S}, \\
        \end{aligned}\\
        R_{t, \alpha_t}+D^{\top}_{t, \alpha_t}P_{t, \alpha_t}D_{t, \alpha_t}>{\textbf{0}},~~~(t,\alpha_t)\in [0,T]\times\mathbb{S},\\
        P_{T, \alpha_T}=G_{T, \alpha_T}.\\
    \end{cases}
\end{equation} 
\begin{theorem}\label{Thm Riccati.2.1}
    If there exists a relaxed compensator $K_{\cdot, \alpha_{\cdot}}\in \varUpsilon $, then Riccati equation \eqref{Markov_eq25} of Problem (SLQ-MRS-UC) admits a unique solution 
    $P_{\cdot, \alpha_{\cdot}}\in C([0,T]\times \mathbb{S};\mathcal{S}^n)$. Moreover, $P_{\cdot, \alpha_{\cdot}}\ge K_{\cdot, \alpha_{\cdot}}$ $a.e.~$ for $(t, \alpha_t)\in [0,T]\times \mathbb{S}$. 
    Additionally, a unique closed-loop optimal control $u^{*}_{\cdot}$ of Problem (SLQ-MRS-UC) has the following form   
    \begin{equation}\label{Markov_eq28}
            \begin{aligned}
            u^*_t=&-\bigl[R_{t, \alpha_t}+D^{\top}_{t, \alpha_t}P_{t, \alpha_t}D_{t, \alpha_t}\bigr]^{-1}\bigl[B^{\top}_{t, \alpha_t}P_{t, \alpha_t}+D^{\top}_{t, \alpha_t}P_{t, \alpha_t}C_{t, \alpha_t}+S^{\top}_{t, \alpha_t}\bigr]x^*_t, \\
        \end{aligned}
    \end{equation}
    where $x^{*}_{\cdot}$ satisfies  (\ref{Markov_eq1}) with the closed-loop control $u^*_t$ in (\ref{Markov_eq28}).
    \end{theorem}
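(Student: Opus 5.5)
The plan is to shift the Riccati equation \eqref{Markov_eq25} by the relaxed compensator, so that the indefinite equation becomes a standard one with positive semidefinite data, and then invoke the classical regime-switching theory. Throughout, $K$ is treated as the deterministic family $\{K_{\cdot,i}\}_{i\in\mathbb{S}}$ with $\dot K_{t,i}=\Theta_{t,i}$, which is how the definition of $\varUpsilon$ is read.

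\textbf{Step 1 (shift).} Set $\widehat P_{t,i}:=P_{t,i}-K_{t,i}$. Substituting $P=\widehat P+K$ into \eqref{Markov_eq25} and using $\dot K=\Theta$, the data regroup exactly as in the definition of $\mathcal{J}^K$:
\begin{itemize}
\item $Q+\Theta+KA+A^\top K+C^\top KC+\sum_j\pi_{i,j}(K_j-K_i)$ becomes $\mathcal{Q}^K$;
\item $S+KB+C^\top KD$ becomes $\mathcal{S}^K$;
\item $R+D^\top KD$ becomes $\mathcal{R}^K$;
\item the terminal value $G-K_T$ becomes $\mathcal{G}^K$.
\end{itemize}
The coupling term splits as $\sum_j\pi_{i,j}P_j=\sum_j\pi_{i,j}(K_j-K_i)+\sum_j\pi_{i,j}\widehat P_j$, because $\sum_j\pi_{i,j}=0$. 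Hence $P$ solves \eqref{Markov_eq25} if and only if $\widehat P$ solves the same Riccati system with $(Q,S,R,G)$ replaced by $(\mathcal{Q}^K,\mathcal{S}^K,\mathcal{R}^K,\mathcal{G}^K)$. This new system carries the constraint $\mathcal{R}^K+D^\top\widehat PD>\mathbf{0}$.

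\textbf{Step 2 (standard case).} By Definition \ref{def 2.1} and Lemma \ref{Prop 2.1}, the shifted data satisfy Assumption \ref{Asm 2.1}: $\mathcal{R}^K>\mathbf{0}$, $\mathcal{Q}^K-\mathcal{S}^K(\mathcal{R}^K)^{-1}(\mathcal{S}^K)^\top\ge\mathbf{0}$, and $\mathcal{G}^K\ge\mathbf{0}$. I will then invoke the classical result for coupled Riccati equations with Markov jumps under positive definiteness (Ji--Chizeck, or Li--Zhou's Markovian-jump Riccati theory). This gives a unique solution $\widehat P\in C([0,T]\times\mathbb{S};\mathcal{S}^n_+)$. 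It also gives the constraint, since $\mathcal{R}^K+D^\top\widehat PD\ge\mathcal{R}^K>\mathbf{0}$.

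\textbf{Main obstacle: proving $\widehat P\ge\mathbf{0}$.} The difficulty is that the a priori bound which keeps the local solution from blowing up must be established for a \emph{coupled} system. I would argue via the shifted control problem on $[s,T]$ started at $(x,i)$. Completing the square with Itô's formula for jumps gives
\[
\tfrac12\langle\widehat P_{s,i}x,x\rangle=\inf_u\mathcal{J}^K_s(x,i;u)\ge0 .
\]
This also bounds $\widehat P$ above by the cost of $u=0$, which is uniform in $s$. With this two-sided bound, the local Lipschitz solution extends to all of $[0,T]$. Uniqueness follows from local Lipschitz continuity of the right-hand side on the set where $\mathcal{R}^K+D^\top\widehat PD>\mathbf{0}$.

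\textbf{Step 3 (transfer back).} Define $P:=\widehat P+K$. It solves \eqref{Markov_eq25}, it is continuous because $K$ is absolutely continuous in $t$, and $P\ge K$ because $\widehat P\ge\mathbf{0}$. Uniqueness transfers through the bijection $P\leftrightarrow\widehat P$.

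\textbf{Closed-loop control.} Apply Itô's formula to $\langle\widehat P_{t,\alpha_t}x_t,x_t\rangle$ and complete the square in $\mathcal{J}^K$:
\[
\mathcal{J}^K(u)=\tfrac12\langle\widehat P_{0,i_0}\xi,\xi\rangle+\tfrac12\mathbb{E}\!\int_0^T\!\big\langle(\mathcal{R}^K+D^\top\widehat PD)(u_t-\Phi_t x_t),\,u_t-\Phi_t x_t\big\rangle dt,
\]
where $\Phi:=-(\mathcal{R}^K+D^\top\widehat PD)^{-1}\big(B^\top\widehat P+D^\top\widehat PC+(\mathcal{S}^K)^\top\big)$. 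Rewriting $\Phi$ in terms of $P$ recovers \eqref{Markov_eq28}. Strict positivity of $\mathcal{R}^K+D^\top\widehat PD$ gives uniqueness of the minimizer. Equation \eqref{Markov_eq7} then transfers optimality to Problem (SLQ-MRS-UC), consistent with Theorem \ref{Thm 3.1}. Finally, the linear closed-loop SDE with bounded coefficients is well posed, so $u^*\in\mathcal{U}_{ad}$.
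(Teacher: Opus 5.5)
Your proposal is correct. For existence, uniqueness and $P\ge K$ it follows the paper's route: shift $P=\widehat P+K$ so that \eqref{Markov_eq25} becomes the Riccati equation \eqref{Markov_eq27} with data $(\mathcal{Q}^K,\mathcal{S}^K,\mathcal{R}^K,\mathcal{G}^K)$, solve that by the positive definite theory, and transfer back. You also carry out the verification that the paper leaves as ``it can be verified'', including the use of $\sum_j\pi_{i,j}=0$ for the coupling term. Your value-function bound supplies the non-negativity of $\widehat P$, which the paper simply states alongside its citation of Zhang et al.

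The genuine difference is in the feedback part. The paper applies It\^o's formula to $P^K_{t,\alpha_t}x^{K*}_t$ to decouple the Hamiltonian system \eqref{Markov_eq13}. This gives $q^{K*}=P^Kx^{K*}$, $z^{K*}=P^K(Cx^{K*}+Du^{K*})$ and $\eta^{K*}_{i,j}=(P^K_j-P^K_i)x^{K*}$. It substitutes these into the stationarity condition \eqref{eq10} and inherits optimality and uniqueness from Theorem \ref{Thm 3.1}. You instead complete the square in $\mathcal{J}^K$, which is self-contained. A single identity gives three things without relying on Theorem \ref{Thm 3.1} or on cited FBSDE solvability:
\begin{itemize}
\item optimality of the feedback law for every initial pair $(\xi,i_0)$, which is what closed-loop optimality actually requires;
\item uniqueness of the minimizer;
\item through \eqref{Markov_eq7}, the value $V(\xi,i_0)=\tfrac12\langle P_{0,i_0}\xi,\xi\rangle$.
\end{itemize}
What the paper's route buys in exchange is an explicit identification of the adjoint processes $(q^*,z^*,\eta^*)$ in terms of the Riccati solution. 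That is the bridge between the open-loop and closed-loop representations.

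Both arguments tacitly read $\mathcal{R}^K\in L^\infty(0,T;\mathcal{S}^m_{++})$ as uniformly positive definite. That reading is what keeps the feedback gain bounded and your continuation argument valid.
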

    \begin{proof} 
        For a given $K_{\cdot, \alpha_{\cdot}}\in\varUpsilon $, 
we introduce the following associated Riccati equation, 
        \begin{equation}\label{Markov_eq27}
            \begin{cases}
                \begin{aligned}
                &{\dot P}^K_{t, \alpha_t}+P^K_{t, \alpha_t}A_{t, \alpha_t}+A^{\top}_{t, \alpha_t}P^K_{t, \alpha_t}+C^{\top}_{t, \alpha_t}P^K_{t, \alpha_t}C_{t, \alpha_t}+\mathcal{Q}^K_{t, \alpha_t}+\sum_{j=1}^{d}\pi_{\alpha_t,j}P^K_{t, j}\\
                &\quad -[P^K_{t, \alpha_t}B_{t, \alpha_t}+\mathcal{S}^K_{t, \alpha_t}+C^{\top}_{t, \alpha_t}P^K_{t, \alpha_t}D_{t, \alpha_t}][\mathcal{R}^K_{t, \alpha_t}+D^{\top}_{t, \alpha_t}P^K_{t, \alpha_t}D_{t, \alpha_t}]^{-1}\\
                &\ \qquad[B^{\top}_{t, \alpha_t}P^K_{t, \alpha_t}+D^{\top}_{t, \alpha_t}P^K_{t, \alpha_t}C_{t, \alpha_t}+{(\mathcal{S}^K_{t, \alpha_t})}^{\top}]={\bf{0}},~~~(t,\alpha_t)\in [0,T]\times\mathbb{S}, \\
                \end{aligned}\\
                \mathcal{R}^K_{t, \alpha_t}+D^{\top}_{t, \alpha_t}P^K_{t, \alpha_t}D_{t, \alpha_t}>{\bf{0}},~~~(t,\alpha_t)\in [0,T]\times\mathbb{S},\\
                P^K_{T, \alpha_T}=\mathcal{G}^K_{T, \alpha_T}.\\
            \end{cases}
        \end{equation}
        The equation $\eqref{Markov_eq27}$ admits a unique solution 
        $P^K_{\cdot, \alpha_{\cdot}}\in C([0,T]\times \mathbb{S};\mathcal{S}^n)$, see Zhang {\em et al.} \cite{zhang2021open}. 
        Moreover, $P^K_{\cdot, \alpha_{\cdot}}$ is uniformly bounded and non-negative. 
        In fact, it can be verified that the solution $P_{t, \alpha_t}$ of \eqref{Markov_eq25} and $P^K_{t, \alpha_t}$ of $\eqref{Markov_eq27}$ have the relationship 
        $P_{t, \alpha_t}=P^K_{t, \alpha_t}+K_{t, \alpha_t}.$
        Thus, Riccati equation \eqref{Markov_eq25} of Problem (SLQ-MRS-UC) admits a unique solution 
        $P_{\cdot, \alpha_{\cdot}}\in C([0,T]\times \mathbb{S};\mathcal{S}^n)$ and 
        $P_{\cdot, \alpha_{\cdot}}\ge K_{\cdot, \alpha_{\cdot}}$ $a.e.$ for $(t, \alpha_t)\in [0,T]\times \mathbb{S}$. 
        Additionally, applying It$\hat{{\rm o}}$'s formula to $P^K_{t, \alpha_t}x^{K*}_t$, and comparing it with $q^{K*}_t$, 
        we obtain the following relationships  
        \begin{equation}\label{Markov_eq29}
            \begin{aligned}
                &q^{K*}_t=P^K_{t, \alpha_t}x^{K*}_t,\eta^{K*}_{i,j}=(P^K_{t, j}-P^K_{t, i})x^{K*}_t,\\
                &z^{K*}_t=P^K_{t, \alpha_t}(C_{t, \alpha_t}x^{K*}_t+D_{t, \alpha_t}u^{K*}_t).
            \end{aligned}
            \end{equation}
            Substituting $\eqref{Markov_eq29}$ into $\eqref{Markov_eq13}$, 
            we can obtain the optimal control $u^{K*}_t$ with the closed-loop form for the system described by equations $\eqref{Markov_eq1}$ and $\eqref{Markov_eq7}$
            \begin{equation}\label{Markov_eq30}
                \begin{aligned}
                    u^{K*}_t=&-\bigl[\mathcal{R}^K_{t, \alpha_t}+D^{\top}_{t, \alpha_t}P^K_{t, \alpha_t}D_{t, \alpha_t}\bigr]^{-1}\bigl[B^{\top}_{t, \alpha_t}P^K_{t, \alpha_t}+D^{\top}_{t, \alpha_t}P^K_{t, \alpha_t}C_{t, \alpha_t}+{(\mathcal{S}^K_{t, \alpha_t})}^{\top}\bigr]x^{K*}_t, 
                \end{aligned}
                \end{equation}
                where $x^{K*}_t$ satisfies   (\ref{Markov_eq1}) with the closed-loop control $u^{K*}_t$ in (\ref{Markov_eq30}).
   
 
                According to Theorem \ref{Thm 3.1}, $(x^*_t,u^*_t)=(x^{K*}_t,u^{K*}_t)$ is also the unique optimal pair of Problem (SLQ-MRS-UC).
                Additionally, it is easy to verify that 
$$\begin{aligned}
    &\mathcal{R}^K_{t, \alpha_t}+D^{\top}_{t, \alpha_t}P^{K}_{t, \alpha_t}D_{t, \alpha_t}=R_{t, \alpha_t}+D^{\top}_{t, \alpha_t}P_{t, \alpha_t}D_{t, \alpha_t},\\  
    &B^{\top}_{t, \alpha_t}P^K_{t, \alpha_t}+D^{\top}_{t, \alpha_t}P^K_{t, \alpha_t}C_{t, \alpha_t}+(\mathcal{S}^K_{t, \alpha_t})^{\top}=B^{\top}_{t, \alpha_t}P_{t, \alpha_t}+D^{\top}_{t, \alpha_t}P_{t, \alpha_t}C_{t, \alpha_t}+S^{\top}_{t, \alpha_t}.\\ 
\end{aligned}$$
    \end{proof}
In particular, we find that
$$\begin{aligned}
                &\mathcal{Q}_{t, \alpha_t}(P)=Q_{t, \alpha_t}+{\dot P}_{t, \alpha_t}+P_{t, \alpha_t}A_{t, \alpha_t}+A^{\top}_{t, \alpha_t}P_{t, \alpha_t}\\
                &\ \ \qquad\qquad+C^{\top}_{t, \alpha_t}P_{t, \alpha_t}C_{t, \alpha_t}+\sum_{j=1}^{d}\pi_{\alpha_{t_-},j}(P_{t, j}-P_{t, \alpha_{t_-}}), \\
                &\mathcal{S}_{t, \alpha_t}(P)=S_{t, \alpha_t}+P_{t, \alpha_t}B_{t, \alpha_t}+C^{\top}_{t, \alpha_t}P_{t, \alpha_t}D_{t, \alpha_t},  \\
                &\mathcal{R}_{t, \alpha_t}(P)=R_{t, \alpha_t}+D^{\top}_{t, \alpha_t}P_{t, \alpha_t}D_{t, \alpha_t},  \\
                &\mathcal{G}_{T, \alpha_T}(P)=G_{T, \alpha_T}-P_{T, \alpha_T}
\end{aligned}$$
satisfy Assumption \ref{Asm 2.1},
which means that the solution of Riccati equation is a special relaxed compensator. 

Theoretically distinct from Li and Zhou \cite{li2002indefinite}, 
this study innovatively introduces a relaxed compensator. 
Our approach breaks through the traditional limitation of directly solving 
CGREs. By constructing an equivalent transformation between indefinite and positive definite problems, 
we establish the key correspondence: the equivalence of Riccati equations 
under indefinite versus positive definite conditions. 
This enables direct application of mature positive-definite theories to indefinite problems. 
Computationally, this method significantly reduces complexity, 
providing a new theoretical framework for solving such control problems. 

\subsection{Riccati Equation with Markov Regime-Switching for Constrained Control}\label{sec: Riccati.2}
In this subsection, we focus on the case where 
the state variable $x_\cdot$ is scalar.
To obtain the closed-loop optimal control of Problem (SLQ-MRS-CC), we need to define the following mappings:
\begin{equation}
    \begin{aligned}
       &H_1(t, \upsilon, P^1_{t, \alpha_t}, \alpha_t;R_{t, \alpha_t},S_{t, \alpha_t})\\
       &:=\upsilon^{\top}(D^{\top}_{t, \alpha_t}D_{t, \alpha_t}P^1_{t, \alpha_t}+R_{t, \alpha_t})\upsilon+2\upsilon^{\top}(B^{\top}_{t, \alpha_t}P^1_{t, \alpha_t}+D^{\top}_{t, \alpha_t}P^1_{t, \alpha_t}C_{t, \alpha_t}+S^{\top}_{t, \alpha_t}),\\
       &H_2(t, \upsilon, P^2_{t, \alpha_t}, \alpha_t;R_{t, \alpha_t},S_{t, \alpha_t})\\
       &:=\upsilon^{\top}(D^{\top}_{t, \alpha_t}D_{t, \alpha_t}P^2_{t, \alpha_t}+R_{t, \alpha_t})\upsilon-2\upsilon^{\top}(B^{\top}_{t, \alpha_t}P^2_{t, \alpha_t}+D^{\top}_{t, \alpha_t}P^2_{t, \alpha_t}C_{t, \alpha_t}+S^{\top}_{t, \alpha_t}),\\
    \end{aligned}
    \nonumber
\end{equation}
and, for $l=1,2$ and $(t, P^l, \alpha_t)\in [0,T]\times \mathbb{R}\times \mathbb{S}$, 
\begin{equation}
    \begin{aligned}
       &\mathcal{H}_l(t, P^l_{t, \alpha_t}, \alpha_t;R_{t, \alpha_t},S_{t, \alpha_t}):=\inf_{\upsilon \in \varGamma} H_l(t, \upsilon, P^l_{t, \alpha_t}, \alpha_t;R_{t, \alpha_t},S_{t, \alpha_t}).
    \end{aligned}
    \nonumber
\end{equation}
GSRE for Problem (SLQ-MRS-CC) is as follows
\begin{equation}\label{Initial Markov_eq34}
    \begin{cases}
        \begin{aligned}
        &{\dot P}^l_{t, \alpha_t}+(2A_{t, \alpha_t}+C^{\top}_{t, \alpha_t}C_{t, \alpha_t})P^l_{t, \alpha_t}+Q_{t, \alpha_t}+\sum_{j=1}^{d}\pi_{\alpha_t, j}P^l_{t, j}\\
        &\quad +\mathcal{H}_l(t, P^l_{t, \alpha_t}, \alpha_t;R_{t, \alpha_t},S_{t, \alpha_t})=0,~~~(t,\alpha_t)\in [0,T]\times\mathbb{S},\\
        \end{aligned}\\
        P^l_{T, \alpha_T}=G_{T, \alpha_T},\\
        R_{t, \alpha_t}+D^{\top}_{t, \alpha_t}D_{t, \alpha_t}P^l_{t, \alpha_t}>{\bf{0}},~~~(t,\alpha_t)\in [0,T]\times\mathbb{S},\\
    \end{cases} 
\end{equation}
where $l=1,2$.

Then, we introduce the following GSRE associated with a given $K_{\cdot, \alpha_{\cdot}}\in\varUpsilon $,
\begin{equation}\label{Markov_eq34}
    \begin{cases}
        \begin{aligned}
        &{\dot P}^{K, l}_{t, \alpha_t}+(2A_{t, \alpha_t}+C^{\top}_{t, \alpha_t}C_{t, \alpha_t})P^{K, l}_{t, \alpha_t}+\mathcal{Q}^K_{t, \alpha_t}+\sum_{j=1}^{d}\pi_{\alpha_t, j}P^{K, l}_{t, j}\\
        &\quad +\mathcal{H}_l(t, P^{K, l}_{t, \alpha_t}, \alpha_t;\mathcal{R}^K_{t, \alpha_t},\mathcal{S}^K_{t, \alpha_t})=0,~~~(t,\alpha_t)\in [0,T]\times\mathbb{S},\\
        \end{aligned}\\
        P^{K, l}_{T, \alpha_T}=\mathcal{G}^K_{T, \alpha_T},\\
        \mathcal{R}^K_{t, \alpha_t}+D^{\top}_{t, \alpha_t}D_{t, \alpha_t}P^{K, l}_{t, \alpha_t}>{\bf{0}},~~~(t,\alpha_t)\in [0,T]\times\mathbb{S},\\
    \end{cases} 
\end{equation}
where $l=1,2$.
\begin{remark}\label{rem Riccati.2.2}
    If $\varGamma$ is symmetric, namely, $-\upsilon \in \varGamma$ whenever $\upsilon \in \varGamma$, 
    then $\mathcal{H}_1(t, P^{K, 1}_{t, \alpha_t},\alpha_t;\\\mathcal{R}^K_{t, \alpha_t},\mathcal{S}^K_{t, \alpha_t})=\mathcal{H}_2(t, P^{K, 2}_{t, \alpha_t}, \alpha_t;\mathcal{R}^K_{t, \alpha_t},\mathcal{S}^K_{t, \alpha_t})$. 
    If \eqref{Markov_eq34} 
    admits a unique solution, then $P^{K, 1}_{t, \alpha_t}=P^{K, 2}_{t, \alpha_t}$, where both $P^{K, 1}_{t, \alpha_t}$ and $P^{K, 2}_{t, \alpha_t}$ are formulated in terms of $P^K_{t, \alpha_t}$ for notational simplicity. 
    Particularly, when there is no control constraint, 
    i.e. $\varGamma=\mathbb{R}^m$, then  
    \eqref{Markov_eq34} reduces to 
    the normal stochastic Riccati equation of Problem (SLQ-MRS-UC).
\end{remark}
Equation \eqref{Markov_eq34} is referred to as the {\em generalized stochastic Riccati equation} (GSRE). 
\begin{definition}\label{def Riccati.2.1}
    $P^{K, l}_{\cdot, \alpha_{\cdot}}\in C([0,T];\mathbb{R})$ for $l=1,2$ and all $\alpha_{\cdot}\in \mathbb{S}$ is called solution to \eqref{Markov_eq34} 
    if they satisfy \eqref{Markov_eq34}. 
\end{definition}
Based on the previous theoretical proof results in this paper, we can verify the following similar relationship: 
$P^l_{t, \alpha_t}=P^{K, l}_{t, \alpha_t}+K_{t, \alpha_t},~~~l=1,2.$
Consequently, the existence and uniqueness of the solution to GSRE for Problem (SLQ-MRS-CC) is equivalent to 
that of \eqref{Markov_eq34}. 

To find the optimal feedback control for Problem (SLQ-MRS-CC), 
we also need to define 
\begin{equation}
    \begin{aligned}
    &\hat{\upsilon}_l(t, P^{K, l}_{t, \alpha_t}, \alpha_t;\mathcal{R}^K_{t, \alpha_t},\mathcal{S}^K_{t, \alpha_t})=\arg\min\limits_{\upsilon \in \varGamma}H_l(t, \upsilon, P^{K, l}_{t, \alpha_t}, \alpha_t;\mathcal{R}^K_{t, \alpha_t},\mathcal{S}^K_{t, \alpha_t}),~~~l=1,2.
    \end{aligned}
    \nonumber
\end{equation}
 \begin{remark}\label{rem Riccati.2.3}
    If $\varGamma$ is symmetric, then $\hat{\upsilon}_1(t, P^K_{t, \alpha_t}, \alpha_t;\mathcal{R}^K_{t, \alpha_t},\mathcal{S}^K_{t, \alpha_t})=-\hat{\upsilon}_2(t, P^K_{t, \alpha_t},\alpha_t;\\\mathcal{R}^K_{t, \alpha_t},\mathcal{S}^K_{t, \alpha_t})$. 
    Particularly, if $\varGamma=\mathbb{R}^m$, then 
    \begin{equation}
    \begin{aligned}
    \hat{\upsilon}_1(t, P^K_{t, \alpha_t}, \alpha_t;\mathcal{R}^K_{t, \alpha_t},\mathcal{S}^K_{t, \alpha_t})=&-(\mathcal{R}^K_{t, \alpha_t}+D^{\top}_{t, \alpha_t}D_{t, \alpha_t}P^K_{t, \alpha_t})^{-1}(B^{\top}_{t, \alpha_t}P^K_{t, \alpha_t}\\
    &+D^{\top}_{t, \alpha_t}P^K_{t, \alpha_t}C_{t, \alpha_t}+(\mathcal{S}^K_{t, \alpha_t})^{\top}).
    \end{aligned}
    \nonumber
\end{equation}
\end{remark}
\begin{theorem}\label{Thm Riccati.2.3}
For a given relaxed compensator $K_{\cdot, \alpha_{\cdot}}\in\varUpsilon $ 
of Problem (SLQ-MRS-CC), \eqref{Initial Markov_eq34} admits unique positive solution
$P^l_{\cdot, \alpha_{\cdot}}$ for $l=1,2$ and all $\alpha_{\cdot}\in \mathbb{S}$.  
The optimal feedback control of Problem (SLQ-MRS-CC)
at time $t$ and state $x_t$ is
$u^*_t=\hat{\upsilon}_1(t, P^1_{t, \alpha_t}, \alpha_t;R_{t, \alpha_t},S_{t, \alpha_t})x^{+}_t+\hat{\upsilon}_2(t, P^2_{t, \alpha_t}, \alpha_t;R_{t, \alpha_t},S_{t, \alpha_t})x^{-}_t.
$
\end{theorem}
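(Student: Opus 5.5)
The proof transfers the problem to the compensated GSRE \eqref{Markov_eq34}, where the data are positive definite, and then translates back through $P^l_{t,\alpha_t}=P^{K,l}_{t,\alpha_t}+K_{t,\alpha_t}$.

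\emph{Step 1: existence and uniqueness for the compensated GSRE.} Fix the relaxed compensator $K_{\cdot,\alpha_\cdot}$. By Definition \ref{def 2.1} and Lemma \ref{Prop 2.1}, the data $(\mathcal{Q}^K,\mathcal{S}^K,\mathcal{R}^K,\mathcal{G}^K)$ satisfy Assumption \ref{Asm 2.1} with $\mathcal{R}^K>\mathbf{0}$. In the scalar-state case, \eqref{Markov_eq34} is therefore the standard cone-constrained extended Riccati system with regime switching, as treated by Hu and Zhou \cite{hu2005constrained} and by Hu \emph{et al.} \cite{hu2022constrained} for Markov-modulated coefficients.

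I would argue existence and uniqueness directly. For fixed $P\ge 0$, the map $\upsilon\mapsto H_l(t,\upsilon,P,\alpha;\mathcal{R}^K,\mathcal{S}^K)$ is strictly convex, with quadratic coefficient at least $\mathcal{R}^K\ge \delta I$. Hence $\mathcal{H}_l$ is finite, nonpositive (take $\upsilon=0$), and locally Lipschitz in $P$ on $\{P\ge 0\}$. The minimizer $\hat\upsilon_l$ is unique and bounded on bounded sets of $P$.

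Truncate $P$ at $0$ and at a large constant $M$. The truncated system of $d$ coupled ODEs then has globally Lipschitz right-hand side and a unique solution. Next I establish a priori bounds.
\begin{itemize}
\item \emph{Lower bound.} Compare with the system in which $\mathcal{H}_l$ is replaced by its worst value. The Schur-complement condition (i) of Lemma \ref{Prop 2.1} gives $\mathcal{Q}^K+\mathcal{H}_l\ge \mathcal{Q}^K-\mathcal{S}^K(\mathcal{R}^K+D^\top D P)^{-1}(\mathcal{S}^K)^\top\ge$ a term proportional to $P$, with bounded coefficient. The linear comparison theorem for the coupled system, valid since $\pi_{ij}\ge 0$ off the diagonal, together with $\mathcal{G}^K\ge 0$, then yields $P^{K,l}\ge 0$.
\item \emph{Upper bound.} Since $\mathcal{H}_l\le 0$, $P^{K,l}$ is dominated by the solution of the linear equation without $\mathcal{H}_l$, which is bounded.
\end{itemize}
These bounds remove the truncation. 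Uniqueness follows from the local Lipschitz property and Gronwall's inequality. Positivity of $\mathcal{R}^K+D^\top D P^{K,l}$ is automatic, since $P^{K,l}\ge 0$.

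\emph{Step 2: transfer to \eqref{Initial Markov_eq34}.} Set $P^l=P^{K,l}+K$. Using the dynamics of $K\in\varUpsilon$ (drift $\Theta+\sum_j\pi_{ij}(K_j-K_i)$), the definitions of $\mathcal{Q}^K,\mathcal{S}^K,\mathcal{R}^K$, and the identity
\[
H_l(t,\upsilon,P^{K,l},\alpha;\mathcal{R}^K,\mathcal{S}^K)=H_l(t,\upsilon,P^l,\alpha;R,S),
\]
which holds because the $K$-terms in $\mathcal{R}^K$ and $\mathcal{S}^K$ exactly absorb $D^\top D K$ and $B^\top K+D^\top K C$, we get two consequences. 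First, $\hat\upsilon_l$ is the same for both systems. Second, $P^l$ solves \eqref{Initial Markov_eq34}, with terminal value $\mathcal{G}^K+K_T=G_T$. The map is a bijection, so uniqueness transfers. The claimed positivity is $P^l\ge K$ in the indefinite sense (and literally $P^{K,l}\ge 0$).

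\emph{Step 3: verification of the feedback.} Apply Itô's formula with jumps to $P^1_{t,\alpha_t}(x_t^+)^2+P^2_{t,\alpha_t}(x_t^-)^2$ along any admissible $u\in\mathcal{U}^c_{ad}$. Tanaka-type terms vanish because the functions are $C^1$ with piecewise constant second derivative. Write $u_t=\upsilon x^+_t$ on $\{x_t>0\}$ and $u_t=\upsilon x^-_t$ on $\{x_t<0\}$, the latter using the sign convention built into $H_2$. Completing the square yields
\[
\mathcal{J}(\xi,i_0;u)=\tfrac12\bigl[P^1_{0,i_0}(\xi^+)^2+P^2_{0,i_0}(\xi^-)^2\bigr]+\tfrac12\mathbb{E}\int_0^T\Bigl(\text{term}\ge 0\Bigr)\,dt .
\]
The integrand is $H_l(u/x^{\pm})-\mathcal{H}_l\ge 0$, and it vanishes iff $u_t=\hat\upsilon_1x^+_t+\hat\upsilon_2x^-_t$. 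This uses $\Gamma$ being a cone, so that $u/x^\pm\in\Gamma$, and handles $\{x_t=0\}$ separately with the $\mathcal{R}$-term. Optimality and uniqueness then follow, consistent with Theorem \ref{Thm Hamiltonian.2.1}. Equivalently, this can be run in the $K$-problem \eqref{Markov_eq7} and translated back.

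The main obstacle is Step 1's a priori lower bound on $P^{K,l}$ in the coupled switching system. The nonlinearity $\mathcal{H}_l$ is only locally Lipschitz and the $d$ equations interact through $\pi_{ij}$. I would first try a comparison argument, applied componentwise and using the quasi-monotone coupling, against the linear system obtained by freezing $\hat\upsilon_l$ at $0$ and at the unconstrained minimizer.
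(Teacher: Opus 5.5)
Your proposal is correct and follows essentially the paper's route: transfer via $P^l=P^{K,l}+K_{\cdot,\alpha_\cdot}$, comparison with auxiliary linear systems for the bounds on \eqref{Markov_eq34}, a Lipschitz approximation for existence, Gronwall for uniqueness, and the cone structure (your completion of squares) for the feedback; the one technical difference is that you remove a fixed truncation using the a priori bounds, where the paper (following Hu \emph{et al.}) uses a monotone-convergence step. Your reading of ``positive'' as $P^{K,l}\ge 0$, i.e.\ $P^l\ge K_{\cdot,\alpha_\cdot}$, is the right one, and one slip should be fixed: in the lower-bound bullet the correct chain is $\mathcal{Q}^K+\mathcal{H}_l(P)=[\mathcal{Q}^K+\mathcal{H}_l(0)]+[\mathcal{H}_l(P)-\mathcal{H}_l(0)]\ge\gamma P$ with $\gamma$ bounded, because your displayed middle expression omits the $P(B+CD)$ cross terms.
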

\begin{proof}
 Hu {\em et al.} \cite{hu2022constrained} 
provide a complete proof for similar equations 
like \eqref{Markov_eq34}. 
For \eqref{Markov_eq34}, we construct an auxiliary linear system of 
ordinary differential equations and use the comparison theorem to prove the uniform boundedness 
of the solution. Then, we apply the Lipschitz approximation technique 
and the monotone convergence principle to 
prove the existence of the solution. 
The uniqueness analysis is accomplished 
by using the Lipschitz condition and Gronwall's inequality.
Combining the properties of the control constraint cone, 
we further derived the existence and uniqueness 
of the optimal control of Problem (SLQ-MRS-CC) 
and its feedback expression. 
Owing to page limitations, detailed derivations are not repeated in this paper.
\end{proof}
Unlike the studies by Hu and Zhou \cite{hu2005constrained} and Hu {\em et al.} \cite{hu2022constrained}, 
this paper makes two key improvements. 
On the one hand, to handle control constraints, we expand the research boundary 
by incorporating all cost functional coefficient cases into the analytical framework, 
forming a more comprehensive system.
On the other hand, using the constructed relaxed compensator and conclusions from the positive definite case, 
we directly obtain the existence and uniqueness of solutions 
to the corresponding ESREs under the indefinite case without re-proving. 
This simplifies the solution process for constrained indefinite problems, 
lowers the theoretical threshold of repeated complex equation proofs, 
and provides a more efficient, universal framework. 
\section{Equity-Bond Asset Allocation Problem}\label{sec: 5}
In this section, we further investigate Problem (EBAA) mentioned in the Introduction 
and provide some numerical simulations to illustrate our theoretical results. 
 
To solve this kind of Problem (EBAA) and derive a unique optimal control strategy, 
we utilize the theoretical framework developed in this paper 
to avoid direct pseudo-inverse computation and find at least one $K_{\cdot, \alpha_{\cdot}}\in\varUpsilon$ such that the quadruple 
$(\mathcal{Q}^K_{\cdot, \alpha_{\cdot}}, \mathcal{S}^K_{\cdot, \alpha_{\cdot}}, \mathcal{R}^K_{\cdot, \alpha_{\cdot}}, \mathcal{G}^K_{\cdot, \alpha_{\cdot}})$ 
satisfies Assumption \ref{Asm 2.1}.
We model the market environment using a two-state Markov chain
that is $\mathbb{S}=\{1,2\}$ 
and assume that the generator of the Markov chain is  
$\mathbb{G}=\left[-1\ 1;1\ -1\right]$.
The finite time horizon is given with 
    $T = 10$, the initial condition $x_0=0$, 
    and the coefficients of Problem (EBAA) are  
     given by $A_{t,1}=0.03$, $B_{t,1}=0.5$, $D_{t,1}=1$,
            $A_{t,2}=0.07$, $B_{t,2}=0.03$, $D_{t,2}=8$,
$            Q_{t,1}=0.02$, $R_{t,1}=-4$, $G_{T,1}=10$, 
$            Q_{t,2}=0.04$, $R_{t,2}=4$, $G_{T,2}=10$. 
%
    Further assume that $\eta_{i,j}=0$. 
    Let $z_t=8$ if $\alpha_t=1$ 
    and $z_t=1$ if $\alpha_t=2$. 

If $\Phi_{\cdot}\in L^2_{\mathbb{F}}(0, T; \mathbb{R}^{n\times n})$ is the solution to the 
following stochastic differential equation 
\begin{equation}
    \begin{cases}
        d\Phi_\tau=A_{\tau,\alpha_\tau}\Phi_\tau d\tau+C_{\tau,\alpha_\tau}\Phi_\tau dW_\tau,\quad \tau\in [t,T],\\
        \Phi_t=I , \alpha_t=i,
    \end{cases}
    \nonumber
\end{equation}
where $I$ represents the identity matrix of the appropriate dimensions. 
Let $M_{t,\alpha_t}\in \mathcal{S}^n_+$ be a 
symmetric positive semi-definite matrix process. 
Define a modified parameter 
$\Theta ^*_{t,\alpha_t}=\Theta _{t,\alpha_t}+M_{t,\alpha_t}$
Furthermore, $K_{t,\alpha_t}$ also solves the following system 
\begin{equation}\label{new 4}
\begin{cases}
    \begin{aligned}
&K_{t,\alpha_t}A_{t,\alpha_t}+A^{\top}_{t,\alpha_t}K_{t,\alpha_t}+C^{\top}_{t,\alpha_t}K_{t,\alpha_t}C_{t,\alpha_t}+\sum_{j=1}^{d}\pi_{\alpha_{t-},j}(K_{t, j}-K_{t,\alpha_{t-}})\\
&\quad+Q_{t,\alpha_t}+\Theta ^*_{t,\alpha_t}=M_{t,\alpha_t},\quad (t,\alpha_t)\in [0,T]\times\mathbb{S},\\
    \end{aligned}\\
K_{T,\alpha_T}=G_{T,\alpha_T}.\\
\end{cases}
\end{equation}
Applying It$\hat{{\rm o}}$'s formula to $\Phi^\top_\tau K_{\tau,\alpha_\tau}\Phi_\tau$, 
    and integrating from $t$ to $T$, we obtain   
    \begin{equation}
        \begin{aligned}
            &\Phi^\top_T K_{T,\alpha_T}\Phi_T=\Phi^\top_t K_{t,\alpha_t}\Phi_t
            +\int_{t}^{T}\Phi^\top_\tau[K_{\tau,\alpha_\tau}A_{\tau,\alpha_\tau}+A^{\top}_{\tau,\alpha_\tau}K_{\tau,\alpha_\tau}+C^{\top}_{\tau,\alpha_\tau}K_{\tau,\alpha_\tau}C_{\tau,\alpha_\tau}\\
            &\quad+\sum_{i,j=1}^{d}\pi_{i,j}\mathcal{X} _{\{\alpha_{\tau_-}=i\}}(K_{\tau,j}-K_{\tau,i})+\Theta _{\tau,\alpha_\tau}]\Phi_\tau d\tau+\int_{t}^{T}\Phi^\top_\tau[C^{\top}_{\tau,\alpha_\tau}K_{\tau,\alpha_\tau}\\
            &\quad+K_{\tau,\alpha_\tau}C_{\tau,\alpha_\tau}]\Phi_\tau dW_\tau+\int_{t}^{T}\Phi^\top_\tau[\sum_{i,j=1}^{d}\pi_{i,j}\mathcal{X} _{\{\alpha_{\tau_-}=i\}}(K_{\tau,j}-K_{\tau,i})]\Phi_\tau d\widetilde{M}_{i,j}(\tau). 
        \end{aligned}
        \nonumber
    \end{equation}
    Since $K_{T,\alpha_T}=G_{T,\alpha_T}$ and $\Phi_t=I$, 
taking conditional expectation in $\mathcal{F}^\alpha_t$ with respect to $K_{t,\alpha_t}$ and using (\ref{new 4}), 
    we find $K_{t,\alpha_t}$ has the following representation:
$    K_{t,\alpha_t}=\mathbb{E}\bigg[\Phi^\top_T G_{T,\alpha_T}\Phi_T+\int_t^T\Phi^\top_\tau Q_{\tau,\alpha_\tau}\Phi_\tau d\tau|\mathcal{F}^\alpha_t\bigg].
$

    Based on the theoretical results in this paper and the parameter values given above, 
    we can solve $\Phi_{\cdot}$ corresponding to any initial time $t\ (t\in [0,T])$.  
    Using the $\Phi_{\cdot}$ obtained at different initial times, we can further 
    derive the explicit solution for the $K_{t,\alpha_t}$. 
    Taking $t=0$ and $t=3.5$ as examples, the trajectories of $\Phi_{\cdot}$ 
    are shown in Fig.~\ref{Fig 1}(a) and (b).
    By considering all time points $t$, we can obtain all explicit solutions of $K_{t,\alpha_t}$, 
    whose trajectory is shown in Fig.~\ref{Fig 1}(c).
    Notably, the results demonstrate that $K_{t,\alpha_t}$ maintains a lower bound of 
    10 throughout the considered time horizon.

\begin{figure*}
    \centering
    \begin{minipage}{0.32\textwidth}
        \centering
        \includegraphics[height=4cm]{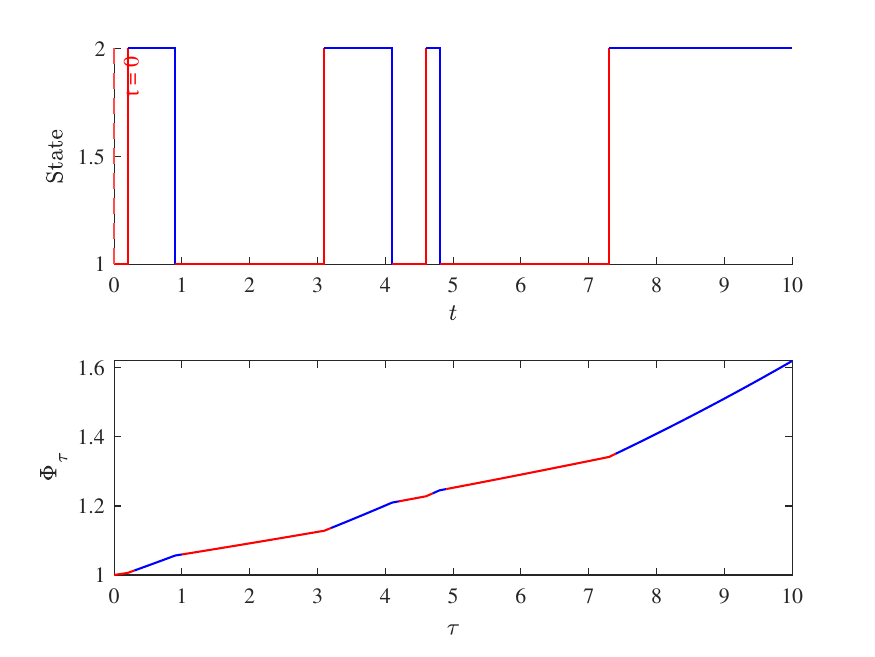}
        \subcaption{$\Phi_\tau$ starts from $\tau=0$ with the corresponding initial market state $\alpha_{0}=1$}
    \end{minipage}
    \hfill
    \begin{minipage}{0.32\textwidth}
        \centering
         \includegraphics[height=4cm]{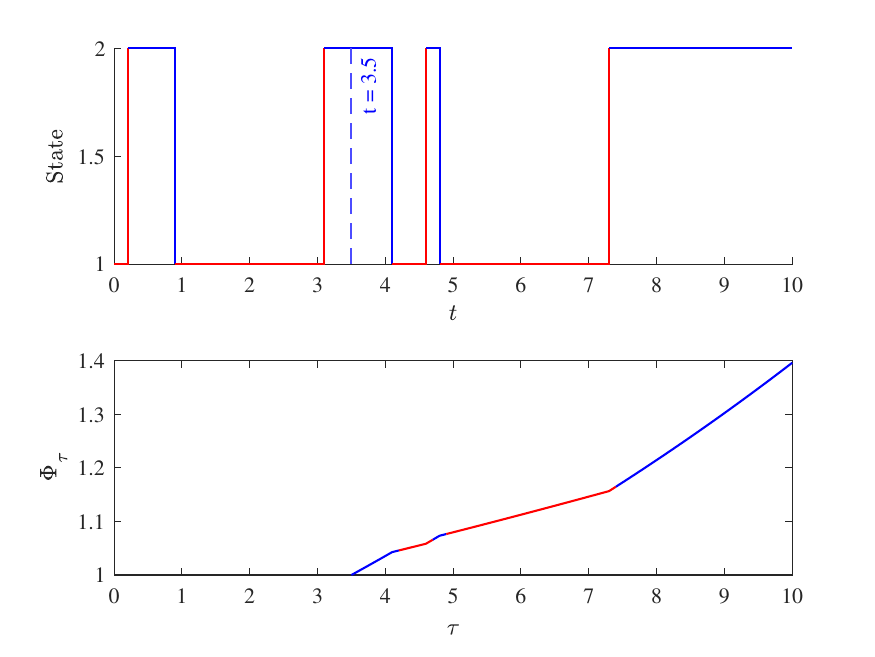}
        \subcaption{$\Phi_\tau$ starts from $\tau=3.5$ with the corresponding initial market state $\alpha_{3.5}=2$}
    \end{minipage}
    \hfill
    \begin{minipage}{0.32\textwidth}
        \centering
    \includegraphics[height=4cm]{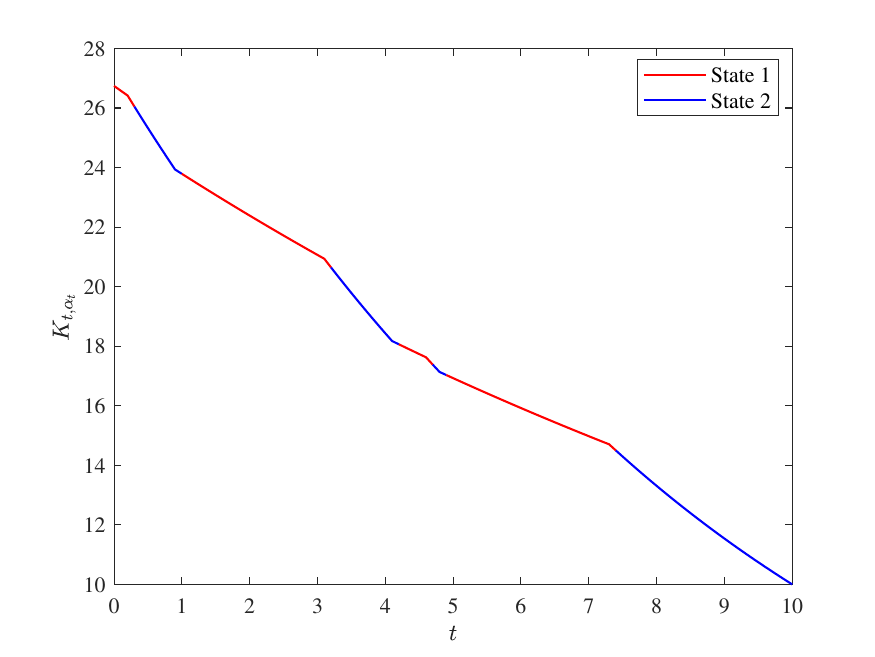}
          \subcaption{The solution of the corresponding $K_{t,\alpha_t}$}
    \end{minipage}

    \caption{The solutions of $\Phi_\tau$ and $K_{t,\alpha_t}$}
    \label{Fig 1}
\end{figure*}

    We get
   $\mathcal{Q}^K_{t,1}=0$, $\mathcal{R}^K_{t,1}\ge 6$,
$\mathcal{G}^K_{T,1}=0$; $\mathcal{Q}^K_{t,2}=0$,
$\mathcal{R}^K_{t,2}\ge 644$, $\mathcal{G}^K_{T,2}=0$. 
%
%
To satisfy the definition of the relaxed compensator, we set $C_{t,\alpha_t}=0$ and 
directly specify the parameter $S_{t,\alpha_t}$ as
$S_{t,\alpha_t}=-K_{t,\alpha_t} B_{t,\alpha_t}$.
Under this specification, the condition
$S_{t,\alpha_t}+K_{t,\alpha_t} B_{t,\alpha_t}+C_{t,\alpha_t}^\top K_{t,\alpha_t} D_{t,\alpha_t}=0$ 
is identically satisfied, which ensures that the obtained $K_{t,\alpha_t}$ fulfills the definition of the relaxed compensator. 
According to Theorem \ref{Thm 2.1}, Problem (EBAA) is well-posed. 

Theorem \ref{Thm 3.1} establishes 
the existence of the unique open-loop optimal control 
for Problem (EBAA-UC), while
Theorem \ref{Thm Riccati.2.1} 
further derives the corresponding unique closed-loop optimal control and its expression is
\begin{equation}\label{Markov_eq39}
    \begin{aligned}
    u^*_t&=-(R_{t, \alpha_t}+D^{\top}_{t, \alpha_t}D_{t, \alpha_t}P_{t, \alpha_t})^{-1}B^{\top}_{t, \alpha_t}P_{t, \alpha_t}x^*_t, \\
\end{aligned}
\end{equation}
where $x^{*}_{\cdot}$ satisfies  (\ref{Markov_eq1}) with the closed-loop control $u^*_t$ in (\ref{Markov_eq39}). 

We present numerical simulations 
to illustrate the theoretical results of Problem (EBAA-UC) 
with Markov regime-switching. 
Here, we adopt a standard method to simulate the Markov chain, that is, 
the state transition of the Markov chain is based on the state transition probability matrix and uniformly distributed random numbers, 
and the next state is determined by the cumulative probability. 
Using the Markov chain $\alpha_t$ with a time step size of $0.1$, 
we obtain the state transition process of the system over the time horizon, as depicted in Fig.~\ref{Fig 3}(a). 
Each vertical line represents a sudden state transition, 
while the horizontal segments indicate the continuation of the state over a specific period. 
By applying the Euler-Maruyama method, along with the Markov chain $\alpha_t$, 
we get the trajectories of $q^*_t$, $x^*_t$, unconstrained open-loop optimal control, $P^*_t$, 
and unconstrained closed-loop optimal control. 
The corresponding dynamics of these variables are illustrated in 
Fig.~\ref{Fig 3}(b)-\ref{Fig 3}(f) respectively. 
Under the unconstrained open-loop optimal control strategy (Fig.~\ref{Fig 3}(d)), 
when the system transitions from State 1 to State 2, 
the instantaneous update of parameters induces a step-like jump 
in the unconstrained open-loop control $u^*_t$. 
In contrast, the unconstrained closed-loop optimal control strategy (Fig.~\ref{Fig 3}(f)) 
dynamically optimizes the control input by solving Riccati 
equation in real-time (Fig.~\ref{Fig 3}(e)) and incorporating feedback 
from the current $x^*_t$ (Fig.~\ref{Fig 3}(c)). 

\begin{figure*}
    \centering
    \begin{minipage}{0.32\textwidth}
        \centering
        \includegraphics[height=4cm]{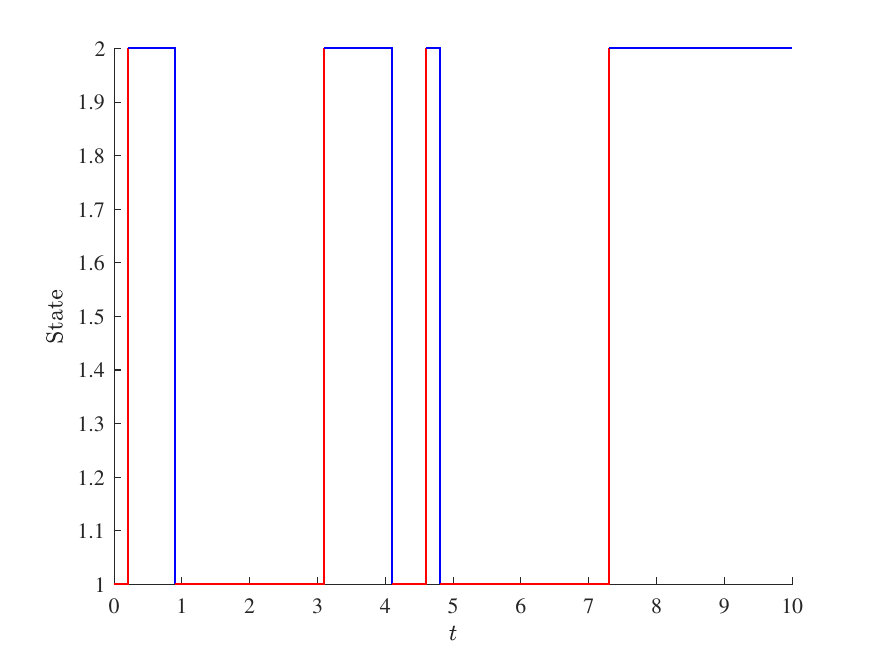}
        \subcaption{Markov regime-switching time series $\alpha_t$}
    \end{minipage}
    \hfill
    \begin{minipage}{0.32\textwidth}
        \centering
        \includegraphics[height=4cm]{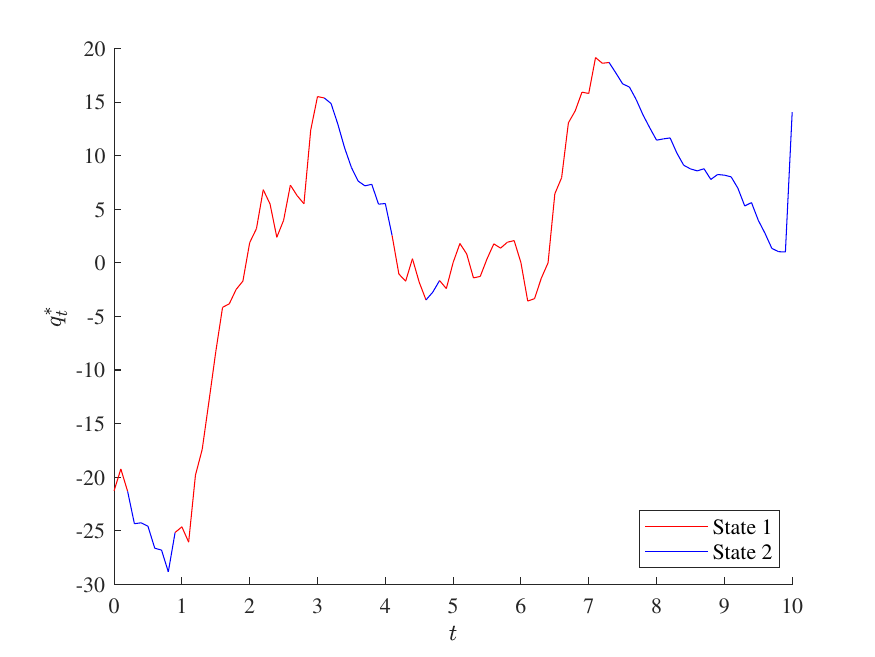}
        \subcaption{Adjoint state $q^*_t$ for unconstrained control}
    \end{minipage}
    \hfill
    \begin{minipage}{0.32\textwidth}
        \centering
        \includegraphics[height=4cm]{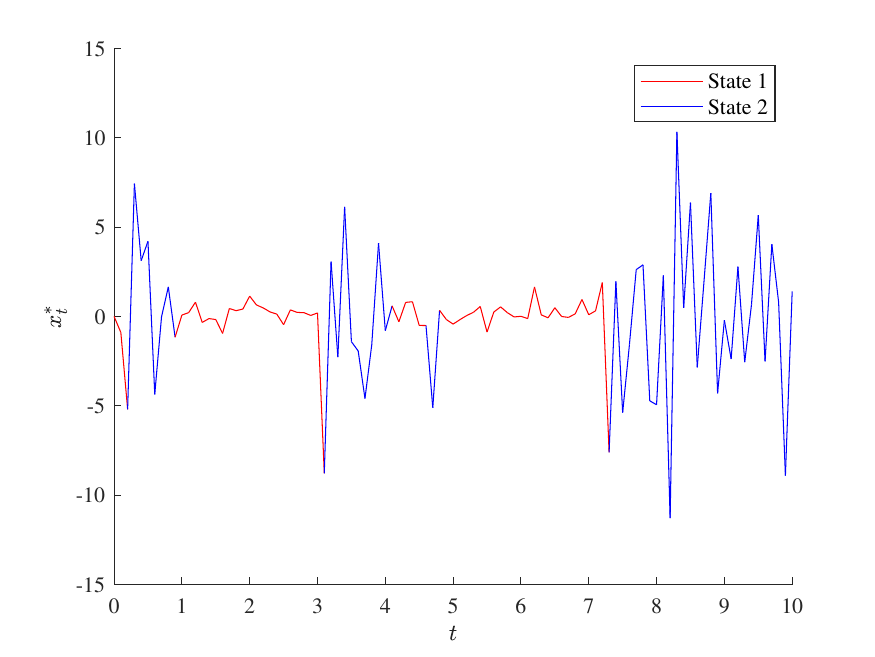}
        \subcaption{The optimal state $x^*_t$}
    \end{minipage}

    \vspace{0.5cm} 

    \begin{minipage}{0.32\textwidth}
        \centering
        \includegraphics[height=4cm]{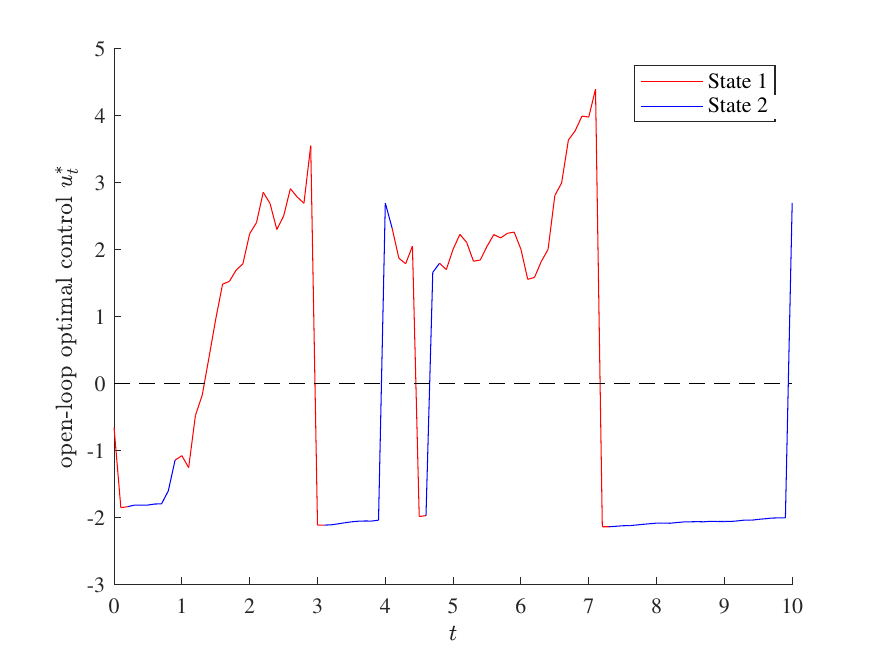}
        \subcaption{Unconstrained open-loop optimal control $u^*_t$}
    \end{minipage}
    \hfill
    \begin{minipage}{0.32\textwidth}
        \centering
        \includegraphics[height=4cm]{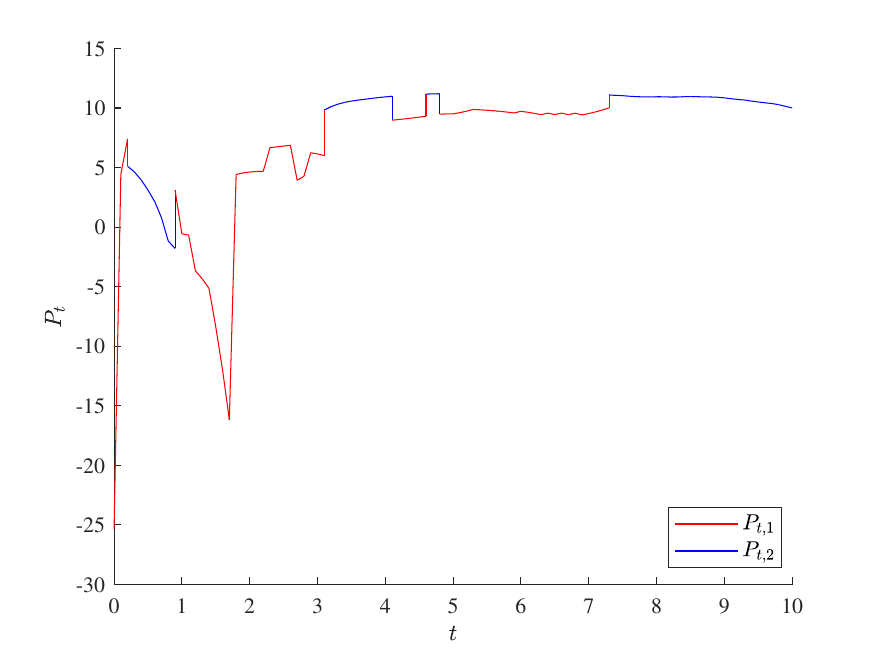}
        \subcaption{The solution of Riccati solution $P_t$ for unconstrained control}
    \end{minipage}
    \hfill
    \begin{minipage}{0.32\textwidth}
        \centering
        \includegraphics[height=4cm]{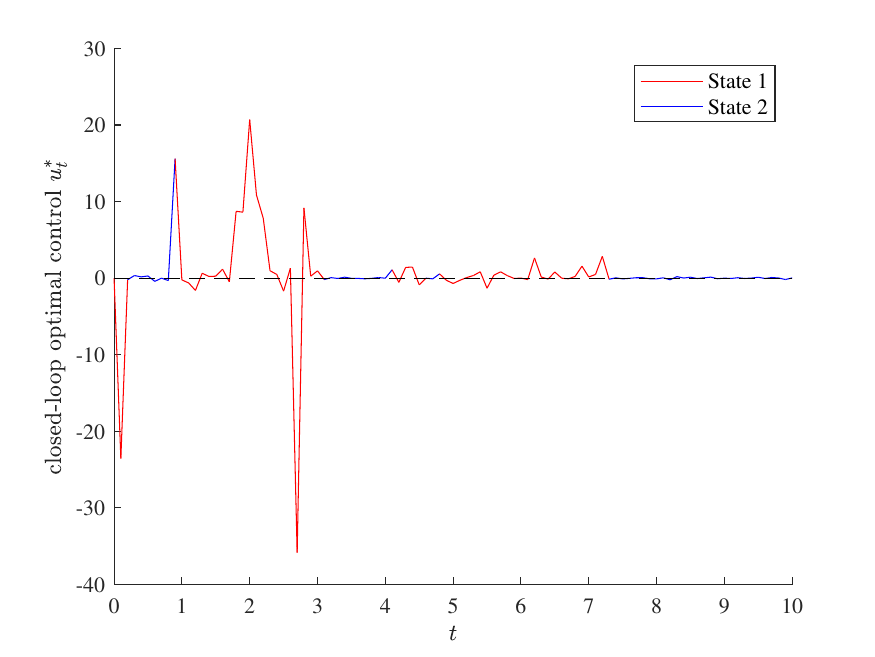}
        \subcaption{Unconstrained closed-loop optimal control $u^*_t$}
    \end{minipage}
    \caption{All simulations under Markov regime-switching framework}
    \label{Fig 3}
\end{figure*}
If Problem (EBAA) prohibits short selling, 
the intensity of stock assets weight adjustment, 
needs to satisfy the hard constraint of $u_t\ge 0$. 
The numerical simulation results of this paper (Fig.~\ref{Fig 3}(d) and \ref{Fig 3}(f)) reveal 
the contradiction between the theoretical assumption and practice. 
Based on the content analyses of Subsection \ref{sec: Hamiltonian.2} and Subsection \ref{sec: Riccati.2}, 
we conduct numerical simulation analyses for Problem (EBAA-CC), 
with the results shown in Fig.~\ref{fig4}. 
Specifically, Fig.~\ref{fig4}(a) and (b)  illustrate the time-varying characteristics of the numerical solutions $P^1_t$ and $P^2_t$ of GSRE, and
Fig.~\ref{fig4}(c) presents the time-varying patterns of the closed-loop optimal control strategy. 

\begin{figure*}
    \centering
    \begin{minipage}{0.32\textwidth}
        \centering
       \includegraphics[height=4cm]{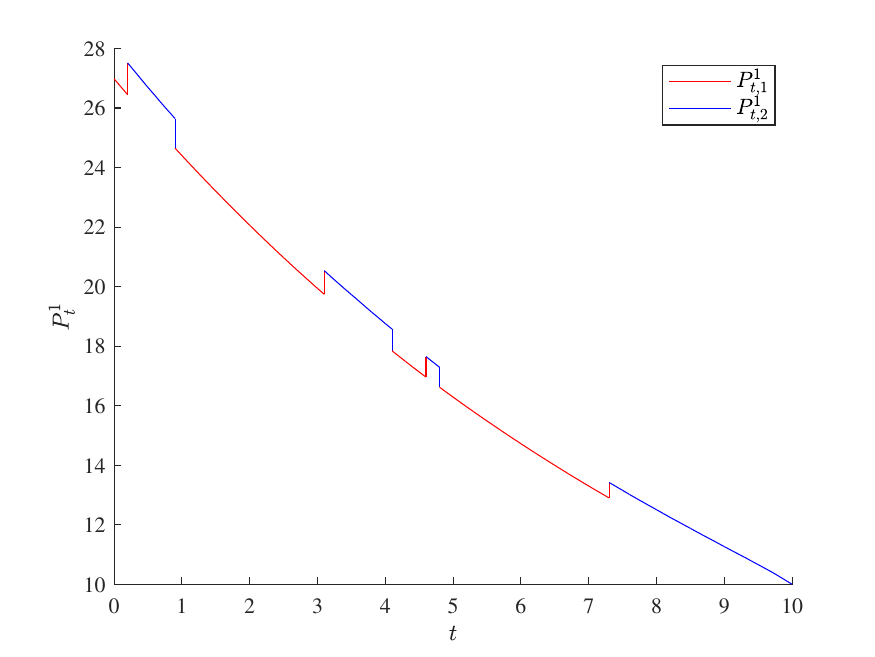}
        \subcaption{$P^1_t$}
    \end{minipage}
    \hfill
    \begin{minipage}{0.32\textwidth}
        \centering
        \includegraphics[height=4cm]{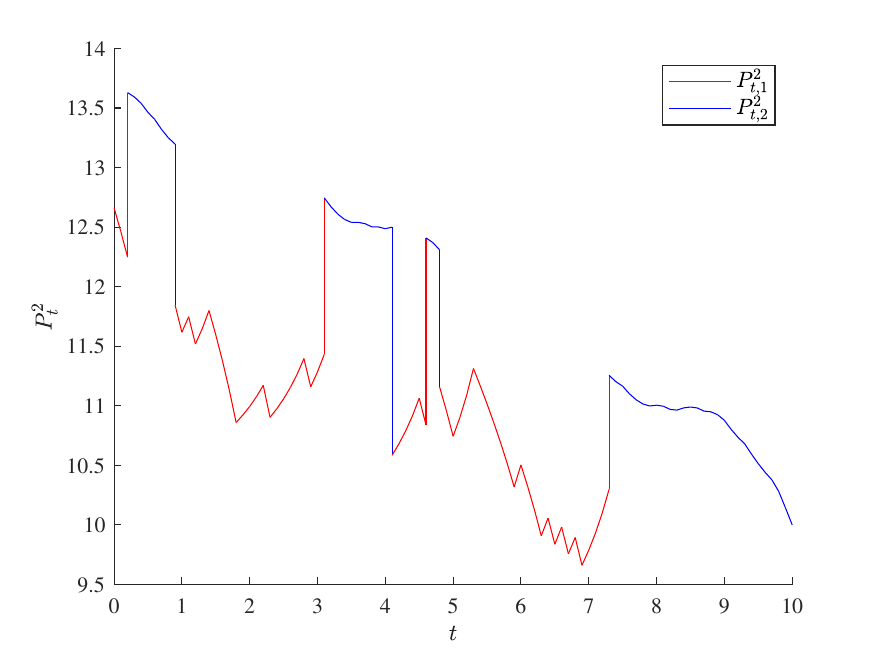}
        \subcaption{$P^2_t$}
    \end{minipage}
    \hfill
    \begin{minipage}{0.32\textwidth}
        \centering
  \includegraphics[height=4cm]{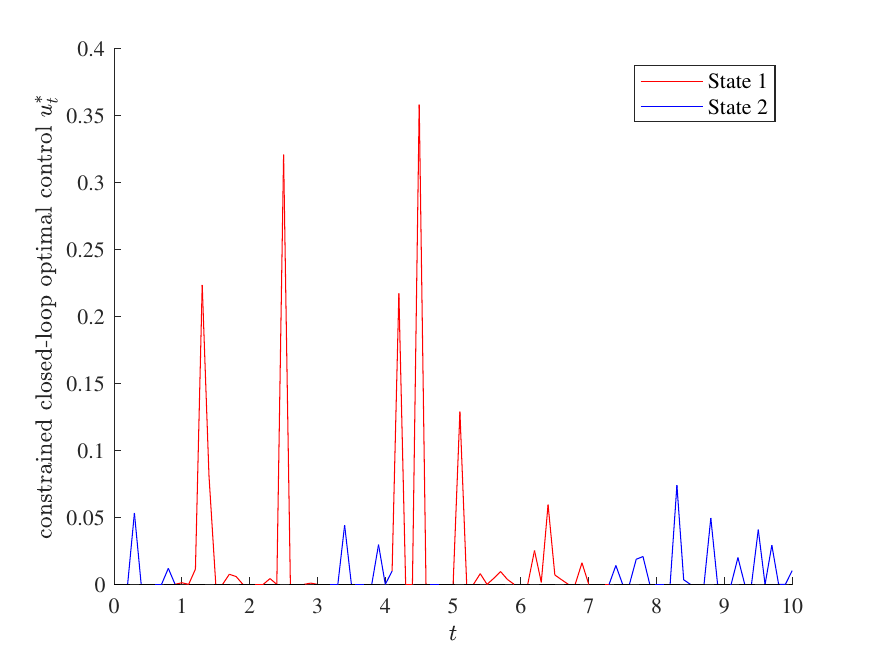}    
\subcaption{Constrained closed-loop optimal control $u^*_t$} 
    \end{minipage}

    \caption{The solutions of GSREs and constrained closed-loop optimal control }
    \label{fig4}
\end{figure*}

\section{Conclusion}\label{sec: 6}
In this paper,an indefinite SLQ optimal control problem with 
Markov regime-switching is studied. 
We define a set of diffusion processes and 
construct relaxed compensators that satisfy these processes.
With such relaxed compensators, 
the analysis is extended from positive definite to indefinite cases, 
establishing the well-posedness of Problem (SLQ-MRS).
We investigate the stochastic Hamiltonian system with Markov regime-switching 
and the corresponding Riccati equation, 
yielding the open-loop and closed-loop optimal controls, respectively. 
Furthermore, we generalize these results to incorporate non-negative control constraints, 
developing generalized stochastic Hamiltonian systems and Riccati equations 
to characterize the optimal strategies.
Applying our framework to Problem  (EBAA), numerical simulations verify the validity of the theoretical results.


\bibliographystyle{elsarticle-harv}
\bibliography{Indefinite-SLQ-MRS}

\end{document}